\newcommand{\fudge}{\mathfrak{C}}
\newcommand{\me}{\ensuremath{\mathrm{e}}} 
\newcommand\reals{\mathbb{R}}
\newcommand\real{\mathbb{R}}
\newcommand\natu{\mathbb{N}}
\newcommand\e{\mathbb{E}}
\newcommand{\vx}{\boldsymbol{x}}
\newcommand{\bsX}{\boldsymbol{X}}
\newcommand{\vX}{\boldsymbol{X}}
\newcommand{\rd}{\,\mathrm{d}}
\newcommand{\dnorm}{\mathcal{N}}
\newcommand{\Prob}{\Pr}
\newcommand{\abs}[1]{\left|#1\right|}
\DeclareMathOperator{\var}{Var}
\newcommand{\hmu}{\hat{\mu}}
\newcommand{\dif}{\rd}
\newcommand{\hsigma}{\hat{\sigma}}
\newcommand{\talpha}{\tilde{\alpha}}
\newcommand{\tkappa}{\tilde{\kappa}}
\newcommand{\cc}{\mathcal{C}}
\newcommand{\cl}{\mathcal{L}}
\def\abs#1{\ensuremath{\left \lvert #1 \right \rvert}}
\newcommand{\norm}[2][{}]{\ensuremath{\left \lVert #2 \right \rVert}_{#1}}
\begin{document}

\date{\today}
\title*{Guaranteed Conservative Fixed Width Confidence Intervals Via Monte Carlo Sampling
\thanks{The first and second authors were partially supported by the National
Science Foundation under DMS-0923111 and DMS-1115392.  The fourth author was partially supported by the National Science Foundation under DMS-0906056.}}
\titlerunning{Fixed Width Confidence Intervals}
\author{Fred J. Hickernell\inst{1} \and
Lan Jiang\inst{1} \and Yuewei Liu\inst{2} \and Art Owen \inst{3}}
\institute{Department of Applied Mathematics,
Illinois Institute of Technology, Chicago, IL, 60616, USA,
\texttt{hickernell@iit.edu,ljiang14@hawk.iit.edu}
\and
School of Mathematics and Statistics, Lanzhou University, Lanzhou City, Gansu, China 730000, \texttt{lyw@lzu.edu.cn}
\and 
Department of Statistics, Stanford University, Stanford, CA, 94305, USA,
\texttt{owen@stanford.edu}
}
%
%
\maketitle

\abstract{Monte Carlo methods are used to approximate the means, $\mu$, of random variables $Y$, whose distributions are not known explicitly.  The key idea is that the average of a random sample, $Y_1, \ldots, Y_n$, tends to $\mu$ as $n$ tends to infinity. This article explores how one can reliably construct a confidence interval for $\mu$ with a prescribed half-width (or error tolerance) $\varepsilon$.  Our proposed two-stage algorithm assumes that the \emph{kurtosis} of $Y$ does not exceed some user-specified bound. An initial independent and identically distributed (IID) sample is used to confidently estimate the variance of $Y$.  A Berry-Esseen inequality then makes it possible to determine the size of the IID sample required to construct the desired confidence interval for $\mu$.  We discuss the important case where $Y=f(\vX)$ and $\vX$ is a random $d$-vector with probability density function $\rho$.  In this case $\mu$ can be interpreted as the integral $\int_{\reals^d} f(\vx) \rho(\vx) \, \dif \vx$, and the Monte Carlo method becomes a method for multidimensional cubature. }

\section{Introduction}

Monte Carlo algorithms provide a flexible way to approximate $\mu = \e(Y)$ when one can generate samples of the random variable $Y$. For example, $Y$ might be the discounted payoff of some financial derivative, which depends on the future performance of assets that are described by a stochastic model.  Then $\mu$ is the fair option price.  The goal is to obtain a \emph{confidence interval} 
\begin{equation} \label{confint}
\Prob[\abs{\mu - \hmu} \le \varepsilon] \ge 1-\alpha,
\end{equation}
where 
\begin{itemize}

\item $\mu$ is approximated by the sample average of $n$ independent and identically distributed (IID) samples of $Y$,
\begin{equation} \label{eq:samplemean}
\hmu = \hmu_n=\frac 1n \sum_{i=1}^n Y_i,
\end{equation}

\item $\varepsilon$ is the half-width of the confidence interval, which also serves as an \emph{error tolerance}, and

\item $\alpha$ is the level of \emph{uncertainty}, e.g., $1\%$ or $0.1\%$, which is fixed in advance.

\end{itemize}

Often the sample size, $n$, is fixed in advance, and the central limit theorem (CLT) provides an approximate value for $\varepsilon$ in terms of $n$ and 
\begin{equation} \label{sigmadef}
\sigma^2=\var(Y)=\e[(Y-\mu)^2],
\end{equation}
which itself may be approximated by the sample variance.
The goal here is somewhat different.  We want to fix $\varepsilon$ in advance and then determine how large the sample size must be to obtain a fixed
width confidence interval of the form \eqref{confint}. Moreover, we want to make sure that our confidence interval is correct, not just approximately correct, or correct in the limit of vanishing $\varepsilon$.  In this paper we present Algorithm \ref{twostagealgo} for obtaining such a fixed width confidence interval for the mean of a real random
variable when one is performing Monte Carlo sampling.

Before presenting the method, we outline the reasons
that existing fixed width confidence intervals are not suitable.  In summary, there are two drawbacks of existing procedures.  Much existing theory is \emph{asymptotic}, i.e., the proposed procedure attains the desired coverage level in the limit as $\varepsilon\to 0$ but does not provide
coverage guarantees for fixed $\varepsilon>0$.  We want such fixed $\varepsilon$ guarantees.  A second drawback is that the theory may make distributional assumptions that are too strong.  In Monte Carlo applications one typically does not have much information about the underlying distribution.  The form of the distribution for $Y$ is generally not known, $\var(Y)$ is generally not known, and $Y$ is not necessarily bounded. We are aiming to derive fixed width confidence intervals that do not require such assumptions.  

The width (equivalently length) of a confidence interval
tends to become smaller as the number $n$ of sampled
function values increases. In special circumstances, we can choose $n$ to get
a confidence interval of at most the desired length and at
least the desired coverage level, $1-\alpha$. For instance, if the variance, $\sigma^2=\var(Y)$, is known then an approach based on Chebychev's
inequality is available, though the actual coverage
will usually be much higher than the nominal level,
meaning that much narrower intervals would have sufficed.
Known variance in addition to a Gaussian distribution for $Y$
supports a fixed width confidence interval construction that
is not too conservative. The CLT provides a confidence interval that is asymptotically correct, but our aim is for something that is definitely correct for finite sample sizes.
Finally, conservative fixed width confidence intervals
for means can be constructed for bounded random variables, by appealing
to exponential inequalities such as Hoeffding's or Chernoff's inequality.  Unfortunately, $Y$ is often unbounded, e.g., in the case where it represents the payoff of a call option.

If the relevant variance or bound is unknown, then approaches
based on sequential statistics \citep{Sie85a}
may be available.  In sequential methods one keeps increasing
$n$ until the interval is narrow enough. Sequential
confidence intervals require us to take account of the
stopping rule when computing the confidence level. Unfortunately, all existing sequential methods are lacking in some aspects. 

\cite{SerfWack1976} consider sequential confidence intervals for the mean (alternatively for the median) in parametric distributions, symmetric about their center point.  The symmetry condition is not suitable for general
purpose Monte Carlo applications.

\cite{ChoRob65a} develop a sequential sampling fixed width confidence interval procedure for the mean, but its guarantees are only asymptotic (as $\varepsilon \to 0$). \cite{MukhDatt1996} give a procedure similar to Chow and Robbins', and it has similar drawbacks.

Bayesian methods can support a fixed width
interval containing $\mu$ with $1-\alpha$ posterior probability, and
Bayesian methods famously do not require one to account
for stopping rules. They do however require strong distributional assumptions.

There is no assumption-free way to obtain exact confidence intervals for a mean,
as has been known since \cite{BahSav56}. Some kind of assumption is needed to rule out settings where the desired quantity is the
mean of a heavy tailed random variable in which
rarely seen large values dominate the mean and spoil the estimate of the variance.
The assumption we use is an upper bound on
the modified kurtosis (normalized fourth moment) of the
random variable $Y$:
\begin{equation} \label{kurtassump}
\tkappa = \frac{\e[(Y-\mu)^4]}{\sigma^4} \le \tkappa_{\max}.
\end{equation}
(The quantity $\tkappa-3$ is commonly called the kurtosis.)  Under such an assumption we present
a two-stage algorithm: the first stage generates
a conservative upper bound on the variance, and the second stage
uses this variance bound and a Berry-Esseen Theorem, which can be thought of as a non-asymptotic CLT, to determine how large $n$ must be for the sample mean to satisfy confidence interval \eqref{confint}.  Theorem \ref{mainadaptthm} demonstrates the validity of the fixed width confidence interval, and Theorem \ref{costtheorem} demonstrates that the cost of this algorithm is reasonable.  These are our main new theoretical results.

Our procedure is a two-stage procedure
rather than a fully sequential one.  In
this it is similar to the method of
\cite{Stei1945a,Stei1949a}, except that
the latter requires normally distributed
data.

One might question whether assumption \eqref{kurtassump}, which involves fourth
moments of $Y$, is more reasonable than an assumption involving only the second moment of $Y$.  For example, using Chebychev's inequality with the assumption 
\begin{equation} \label{boundedvar}
\sigma^2 \le \sigma^2_{\max}
\end{equation}
also yields a fixed width confidence interval of the form \eqref{confint}.  We would argue that \eqref{kurtassump} is indeed more reasonable.  First, if $Y$ satisfies \eqref{kurtassump}, then so does $cY$ for any nonzero $c$, however, the analog does not hold for \eqref{boundedvar}.  In fact, if $\sigma$ is nonzero, then \eqref{boundedvar} must be violated by $cY$ for $c$ sufficiently large.  Second, making $\tkappa_{\max}$ a factor of $10$ or $100$ larger than $\tkappa$ does not significantly affect the total cost (number of samples required) of our two-stage Monte Carlo Algorithm \ref{twostagealgo} for a large range of values of $\sigma/\varepsilon$.  However, the cost of our Monte Carlo algorithm, and indeed any Monte Carlo algorithm based on IID sampling is proportional to $\sigma^2$, so overestimating $\sigma^2$ by a factor of $10$ or $100$ or more to be safe increases the cost of the algorithm by that factor. 

An important special case of computing $\mu=\e(Y)$ arises in the situation where $Y=f(\vX)$ for some function $f: \reals^d \to \reals$ and some random vector $\vX$ with probability density function $\rho: \reals^d \to [0,\infty)$.  One may then interpret the mean of $Y$ as the multidimensional integral 
\begin{equation} \label{muintegral}
\mu=\mu(f)=\e(Y) = \int_{\reals^d} f(\vx) \rho(\vx) \, \dif \vx.
\end{equation}
Note that unlike the typical probability and statistics setting, where $f$ denotes a probability density function, in this paper $f$ denotes an integrand, and $\rho$ denotes the probability density function.
Given the problem of evaluating $\mu=\int_{\reals^d} g(\vx) \, \dif \vx$, one must choose a probability density function $\rho$ for which one can easily generate random vectors $\vX$, and then set $f = g/\rho$.  The quantities $\sigma^2$ and $\tkappa$ defined above can be written in terms of weighted $\cl_p$-norms of $f$:
\begin{equation} \label{Lpnormdef}
\norm[p]{f} :=\left\{\int_{\reals^d} \abs{f(\vx)}^p \rho(\vx) \, \dif \vx\right\}^{1/p}, \qquad
\sigma^2 = \norm[2]{f-\mu}^2, \qquad \tkappa = \frac{\norm[4]{f-\mu}^4}{\norm[2]{f-\mu}^4}.
\end{equation}
For a given $g$, the choice of $\rho$ is not unique, and making an optimal choice belongs to the realm of \emph{importance sampling}. The assumption of bounded kurtosis, \eqref{kurtassump}, required by Algorithm \ref{twostagealgo}, corresponds to an assumption that the integrand $f$ lies in the \emph{cone} of functions
\begin{equation} \label{conedef}
\cc_{\tkappa_{\max}} = \{ f \in \cl_4 : \norm[4]{f-\mu(f)} \le \tkappa_{\max}^{1/4} \norm[2]{f-\mu(f)} \}.
\end{equation}
This is in contrast to a \emph{ball} of functions, which would be the case if one was satisfying a bounded variance condition, \eqref{boundedvar}.

From the perspective of numerical analysis, if $\rho$ has independent marginals, one may apply a product form of a univariate quadrature rule to evaluate $\mu$.  However, this consumes a geometrically increasing number of samples as $d$ increases, and moreover, such methods often require rather strict smoothness assumptions on $f$.  

If $f$ satisfies moderate smoothness conditions, then (randomized) quasi-Monte Carlo methods, or low discrepancy sampling methods for evaluating $\mu$ are more efficient than simple Monte Carlo \citep{Nie92,SloJoe94,Lem09a,DicPil10a}.  
Unfortunately, practical error estimation remains a challenge for quasi-Monte Carlo methods.  Heuristic methods have been proposed, but they lack theoretical justification. One such heuristic is used with reasonable success in the numerical examples of Section \ref{numerexsec}.  Independent randomizations of quasi-Monte Carlo rules of fixed sample size can be used to estimate their errors, but they do not yet lead to guaranteed, fixed width confidence intervals.

Computational mathematicians have also addressed the problem of constructing automatic algorithms, i.e., given an error tolerance of $\varepsilon$, one computes an approximation, $\hmu$, based on $n$ evaluations of the integrand $f$, such that $\abs{\mu-\hmu} \le \varepsilon$. For example, MATLAB \citep{MAT7.12}, a popular numerical package, contains {\tt quad}, an adaptive Simpson's rule for univariate quadrature routine developed by \cite{GanGau00a}. Although {\tt quad} and other automatic rules generally work well in practice, they do not have any rigorous guarantees that the error tolerance is met, and it is relatively simple to construct functions that fool them.  This is discussed in Section \ref{numerexsec}.  Since a random algorithm, like Monte Carlo, gives a random answer, any statements about satisfying an error criterion must be probabilistic.  This leads us back to the problem of finding a fixed width confidence interval, \eqref{confint}.

An outline of this paper follows. Section \ref{sec:background} defines key terminology and provides certain inequalities used to construct our fixed width confidence intervals.  The new two-stage Algorithm \ref{twostagealgo} is described in Section \ref{sec:twostage}, where rigorous guarantees of its success and its cost are provided.  Section \ref{numerexsec} illustrates the challenges of computing $\mu$ to a guaranteed precision through several numerical examples.  This paper ends with a discussion of our results and further work to be done.

\section{Background probability and statistics}\label{sec:background}

In our Monte Carlo applications, a quantity of interest
is written as an expectation: $\mu = \e(Y)$, where $Y$
is a real valued random variable.  As mentioned above, very often
$Y = f(\bsX)$ where $\bsX\in\real^d$ is a random vector
with probability density function $\rho$. In other settings the random quantity $\bsX$ might
have a discrete distribution or be infinite dimensional (e.g,. a Gaussian
process) or both. For Monte Carlo estimation, we can
work with the distribution of $Y$ alone. The Monte Carlo estimate of $\mu$ is the sample mean, as given in \eqref{eq:samplemean}, where the $Y_i$ are IID random variables with the same distribution as $Y$.

\subsection{Moments}

Our methods require conditions on the first four moments of
$Y$ as described here. The variance of $Y$, as defined in \eqref{sigmadef}, is denoted by $\sigma^2$, and its non-negative square root, $\sigma$, is the standard deviation of $Y$.   Some of our expressions assume without stating it that $\sigma>0$, and all will require $\sigma<\infty$.  The skewness of $Y$ is
$ \gamma = \e[ (Y-\mu)^3 ]/\sigma^3,$
and the kurtosis of $Y$ is
$ \kappa = \tkappa-3 = \e[(Y-\mu)^4]/\sigma^4-3$ (see \eqref{kurtassump}).
The mysterious $3$ in $\kappa$ is there to
make it zero for Gaussian random variables.
Also, $\mu,\sigma^2,\gamma,\kappa$ are related to the
first four cumulants \cite[Chap.\ 2]{McC87a} of the distribution of $Y$, meaning that
\[
\log(\e[\exp(tY)]) = \mu t + \frac{\sigma^2 t^2}{2} + \frac{ \gamma \sigma^3t^3}{3!} + \frac{\kappa \sigma^4 t^4}{4!} + o(t^4).
\]
Our main results require a known upper bound for $\kappa$, which then implies that $\sigma$ and $\gamma$ are finite.

\subsection{CLT intervals}

A random variable $Z$ has the standard normal distribution,
denoted by $\dnorm(0,1)$, if 
$$\Pr( Z\le z ) = \frac1{\sqrt{2\pi}}\int_{-\infty}^z
\exp(-t^2/2)\rd t =: \Phi(z).$$
Under the central limit theorem,
the distribution of $\sqrt{n}(\hat\mu_n-\mu)/\sigma$
approaches $\dnorm(0,1)$ 
as $n\to\infty$, where $\hmu_n$ denotes the sample mean of $n$ IID samples.
As a result
\begin{align}\label{eq:99ci}
\Pr\bigl(
\hat\mu_n-2.58\sigma/\sqrt{n}
\le \mu\le
\hat\mu_n+2.58\sigma/\sqrt{n}\bigr)
\to 0.99
\end{align}
as $n\to\infty$.
We write the interval in~\eqref{eq:99ci}
as $\hat\mu_n\pm 2.58\sigma/\sqrt{n}$.
Equation~\eqref{eq:99ci} cannot be used when
$\sigma^2$ is unknown, but the usual estimate
\begin{align}\label{eq:samplevar}
s^2_n = \frac1{n-1}\sum_{i=1}^n(Y_i-\hat\mu_n)^2
\end{align}
may be substituted, yielding the interval
$\hat\mu_n\pm2.58s_n/\sqrt{n}$ which also
satisfies the limit in~\eqref{eq:99ci}
by Slutsky's theorem \citep{LehRom01a}. For an arbitrary
confidence level $1-\alpha\in(0,1)$, we replace
the constant $2.58$ by $z_{\alpha/2}=\Phi^{-1}(1-\alpha/2)$.
The width of this interval is
$2z_{\alpha/2}s_n/\sqrt{n}$, and when $\mu$ is in
the interval then the absolute error
$|\mu-\hat\mu_n|\le 
\varepsilon := z_{\alpha/2}s_n/\sqrt{n}$.

The coverage level of the CLT interval is only asymptotic. In
more detail, \cite[p.\ 948]{Hal88a} shows that
\begin{align}\label{eq:halls}
\Pr\bigl(
\abs{\mu - 
\hat\mu_n} \le 2.58 s/\sqrt{n}
\bigr)
= 0.99 + \frac1n(A + B\gamma^2+C\kappa) + O\Bigl(\frac1{n^2}\Bigr)
\end{align}
for constants $A$, $B$, and $C$ that depend
on the desired coverage level (here $99$\%).  Hall's theorem
requires only that the random variable $Y$ has sufficiently
many finite moments
and is not supported solely on a lattice (such as
the integers).
It is interesting to note that the $O(1/n)$ coverage error in
\eqref{eq:halls} is better than the $O(1/\sqrt{n})$
root mean squared error for the estimate $\hat\mu_n$ itself.

\subsection{Standard Probability Inequalities}

Here we present some well known inequalities
that we will use.
First, Chebychev's inequality ensures that
a random variable (such as $\hat\mu_n$) is
seldom too far from its mean.

\begin{theorem}[Chebychev's Inequality] \citep[6.1c, p.\ 52]{LinBai10a}\label{Chebineqthm} Let $Z$ be a random variable with mean $\mu$ and variance $\sigma^2 \ge 0$.  
Then for all $\varepsilon >0$,
\[
\Prob[\abs{Z-\mu} \ge \varepsilon ] \le \frac{\sigma^2}{\varepsilon^2}.
\]
\end{theorem}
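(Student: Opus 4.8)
The plan is to derive the bound directly from the definition $\sigma^2 = \e[(Z-\mu)^2]$, via the standard truncation argument that underlies Markov's inequality. First I would fix $\varepsilon > 0$ and observe that the events $\{\abs{Z-\mu}\ge\varepsilon\}$ and $\{(Z-\mu)^2 \ge \varepsilon^2\}$ are identical, so it suffices to control the probability that the squared deviation exceeds $\varepsilon^2$.

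Next I would split the expectation $\e[(Z-\mu)^2]$ into the contribution from the event $E := \{(Z-\mu)^2 \ge \varepsilon^2\}$ and the contribution from its complement. Since $(Z-\mu)^2 \ge 0$ everywhere, the contribution from $E^{c}$ is nonnegative and can be discarded; on $E$ the integrand is at least $\varepsilon^2$. This yields $\sigma^2 \ge \varepsilon^2\,\Prob[E] = \varepsilon^2\,\Prob[\abs{Z-\mu}\ge\varepsilon]$, and dividing by $\varepsilon^2 > 0$ finishes the proof. Equivalently and more compactly, one takes expectations of the pointwise inequality $\varepsilon^2 \mathbf{1}_{E} \le (Z-\mu)^2$.

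There is essentially no obstacle here; the only points worth remarking are that (i) the hypothesis permits $\sigma^2 = 0$, in which case the conclusion forces $\Prob[\abs{Z-\mu}\ge\varepsilon]=0$, consistent with $Z=\mu$ almost surely, and the argument above applies without change; and (ii) the bound is informative only when $\varepsilon > \sigma$, since otherwise the right-hand side is at least one and the estimate is vacuous — which is precisely why the later two-stage algorithm invokes it with $\varepsilon$ taken to be a suitable multiple of a conservatively estimated standard deviation rather than with $\varepsilon$ directly.
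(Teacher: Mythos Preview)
Your argument is correct and is the standard derivation of Chebychev's inequality via the Markov/truncation idea. Note, however, that the paper does not supply its own proof of this theorem: it is stated with a citation to \citep[6.1c, p.\ 52]{LinBai10a} and used as a black box, so there is nothing in the paper to compare your approach against.
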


In some settings we need a one sided inequality
like Chebychev's. We will use this one due to Cantelli.

\begin{theorem}[Cantelli's Inequality] \citep[6.1e, p.\ 53]{LinBai10a} \label{Can}
Let $Z$ be any random variable with mean $\mu$ and finite variance $\sigma^2$.  For any $a\geq 0$, it
follows that:
\[
\Prob[Z-\mu \geq a]\leq \frac{\sigma^2}{a^2+\sigma^2}.
\]
\end{theorem}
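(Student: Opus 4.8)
The plan is to prove Cantelli's inequality by centering, introducing a free shift parameter, applying a Markov-type tail bound to a squared quantity, and then optimizing over the shift.

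First I would reduce to the case $\mu = 0$: replacing $Z$ by $Z-\mu$ leaves $\sigma^2$ unchanged and preserves the event $\{Z-\mu \ge a\}$. I would also dispose of the degenerate cases. If $\sigma^2 = 0$ then $Z = \mu$ almost surely, so $\Prob[Z-\mu \ge a]$ equals $1$ when $a=0$ and $0$ when $a>0$, and the bound holds (interpreting $a=0$ as the vacuous case). If $a = 0$ and $\sigma^2>0$ the claimed bound is $\sigma^2/\sigma^2 = 1$, which is trivially true. Hence it suffices to treat $a>0$, $\sigma^2>0$, and $\mu = 0$.

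The key step is the shift trick. For any $t \ge 0$, the event $\{Z \ge a\}$ is contained in $\{Z+t \ge a+t\}$, and since $a+t>0$ this is in turn contained in $\{(Z+t)^2 \ge (a+t)^2\}$. Applying the elementary bound $\Prob[W \ge c] \le \e[W]/c$ for a nonnegative random variable $W$ and $c>0$ (which follows from the same one-line argument as Chebychev's inequality, Theorem~\ref{Chebineqthm}) to $W = (Z+t)^2$ gives
\[
\Prob[Z \ge a] \;\le\; \frac{\e[(Z+t)^2]}{(a+t)^2} \;=\; \frac{\sigma^2 + t^2}{(a+t)^2},
\]
using $\e[Z] = 0$ and $\e[Z^2] = \sigma^2$.

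Finally I would minimize the right-hand side over $t \ge 0$. Differentiating $g(t) = (\sigma^2+t^2)/(a+t)^2$ shows $g'(t)$ has the sign of $at - \sigma^2$, so the unique critical point is $t^\star = \sigma^2/a$, it is positive (hence feasible), and it is a genuine minimum on $[0,\infty)$. Substituting $t = \sigma^2/a$ collapses the bound to $\sigma^2/(a^2+\sigma^2)$, as claimed. The only point requiring any care --- the \emph{main obstacle}, though it is mild --- is confirming that the optimizing shift lies in the admissible range $t \ge 0$ and that it yields the global minimum rather than a spurious stationary point; the rest is routine algebra.
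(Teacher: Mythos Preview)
Your argument is correct and is in fact the standard proof of Cantelli's inequality via the shift-and-square trick followed by Markov's inequality and optimization over the shift. The paper itself does not supply a proof of this theorem at all; it simply states the result and cites \citep[6.1e, p.\ 53]{LinBai10a} as a reference, so there is no ``paper's own proof'' to compare against. Your treatment is exactly what one would find in a textbook derivation, and it would serve perfectly well as a self-contained proof were one desired here. The only quibble is the degenerate corner $\sigma^2=0$, $a=0$, where the right-hand side $\sigma^2/(a^2+\sigma^2)$ is formally $0/0$; this is an artifact of the theorem's statement rather than a flaw in your reasoning, and your parenthetical ``vacuous case'' remark is the appropriate way to dismiss it.
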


Berry-Esseen type theorems govern the rate at which
a CLT takes hold. We will use the following theorem which combines recent work on both uniform and non-uniform ($x$-dependent right hand side) versions.

\begin{theorem}[Berry-Esseen Inequality] \label{BE} Let $Y_1,\dots,Y_n$ be IID random variables with mean
$\mu$, variance $\sigma^2>0$, and third centered moment $M_3=E\abs{Y_i - \mu}^3/\sigma^3 < \infty$.  Let $\hmu_n=(Y_1 +\cdots + Y_n)/n$ denote the sample mean.
Then
\begin{multline*}
\abs{\Prob\left[\frac{\hmu-\mu}{\sigma/\sqrt{n}} <x
\right]-\Phi(x)} \\
\leq \Delta_n(x,M_3) := \frac{1}{\sqrt{n}} \min\left(A_1(M_3+A_2), \frac{A_3 M_3}{1+\abs{x}^{3}} \right) \qquad \forall x \in \reals,
\end{multline*}
where $A_1=0.3328$ and $A_2=0.429$ \citep{She11a}, and $A_3=18.1139$ \citep{NefShe12a}.
\end{theorem}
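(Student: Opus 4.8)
The plan is to assemble Theorem~\ref{BE} from two results already in the literature -- a \emph{uniform} Berry--Esseen estimate and a \emph{non-uniform} one -- and then to use the trivial observation that the pointwise minimum of two valid upper bounds is again a valid upper bound. Throughout, write $S_n = \sqrt{n}(\hmu_n-\mu)/\sigma = (1/(\sigma\sqrt{n}))\sum_{i=1}^n(Y_i-\mu)$ for the standardized sample mean whose distribution function is compared with $\Phi$. Since the $Y_i$ are IID with $\sigma>0$ and $M_3<\infty$, the variable $S_n$ has mean $0$, variance $1$, and its summands have normalized third absolute moment $M_3$ (note $M_3\ge 1$ by Jensen applied to $t\mapsto t^{3/2}$), which is exactly the hypothesis set of the classical estimates.

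For the first term in the $\min$, I would invoke the sharp-constant uniform bound of \cite{She11a}: for IID summands, $\abs{\Prob[S_n<x]-\Phi(x)}\le A_1(M_3+A_2)/\sqrt{n}$ for all $x\in\reals$, with $A_1=0.3328$ and $A_2=0.429$. The ``$+A_2$'' shape, in place of a bare $AM_3/\sqrt{n}$, is precisely what permits the constant to be pushed this low while still covering distributions whose normalized third moment is near its minimal possible value $1$; no further hypotheses are needed.

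For the second term, I would invoke the non-uniform estimate of Nagaev--Bikelis type with the improved constant of \cite{NefShe12a}: $\abs{\Prob[S_n<x]-\Phi(x)}\le A_3 M_3/(\sqrt{n}\,(1+\abs{x}^3))$ for all $x$, with $A_3=18.1139$. The one bookkeeping point to check is the exact form of the denominator, since some versions of the non-uniform inequality are stated with $(1+\abs{x})^3$ rather than $1+\abs{x}^3$; if the cited statement is in the former form one converts using $(1+\abs{x})^3\le 4(1+\abs{x}^3)$ and absorbs the factor, so I would verify which normalization \cite{NefShe12a} uses to be sure that $A_3=18.1139$ is correct as written here.

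Finally, for each fixed $x$ both displayed inequalities hold simultaneously, so $\abs{\Prob[S_n<x]-\Phi(x)}$ is bounded by each right-hand side and hence by their minimum, which is exactly $\Delta_n(x,M_3)$; that the probability uses ``$<x$'' rather than ``$\le x$'' is immaterial because $\Phi$ is continuous and the cited bounds are for the distribution function. I do not expect a genuine obstacle: the only real care needed is in transcribing the two cited bounds into a common normalization -- matching the definition of $S_n$, the meaning of $M_3$, and the $(1+\abs{x})^3$ versus $1+\abs{x}^3$ convention -- so that the stated constants $A_1,A_2,A_3$ come out correct.
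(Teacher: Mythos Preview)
Your proposal is correct and matches the paper's treatment: the paper gives no proof of Theorem~\ref{BE} at all, merely stating it as the combination of the uniform bound from \cite{She11a} and the non-uniform bound from \cite{NefShe12a}, with the constants attributed via citation. Your assembly---invoke each cited inequality for the standardized sum and take the pointwise minimum---is exactly the (trivial) argument implicit in the theorem statement, and your caution about the $(1+\abs{x})^3$ versus $1+\abs{x}^3$ normalization is appropriate bookkeeping but not a substantive difference.
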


The constants in the Berry-Esseen Inequality above have been an area of active research.  We would not be surprised if there are further improvements in the near future.

Our method requires probabilistic bounds on the sample variance, $s_n^2$. For that,
we will use some moments of the variance estimate.

\begin{theorem} \cite[Eq.\ (7.16), p.\ 265]{Mil86} \label{Varvarthm} Let $Y_1, \ldots, Y_n$ be IID random variables with variance $\sigma^2$ and modified
kurtosis $\tilde \kappa$ defined in \eqref{kurtassump}.
Let  $s^2_n$ be the sample variance as defined in \eqref{eq:samplevar}.  Then the sample variance is unbiased, $\e(s^2_n)=\sigma^2$, and its variance is
\[
\var(s^2_n) = \frac{\sigma^4}{n} \left ( \tilde\kappa  - \frac{n-3}{n-1} \right).
\]
\end{theorem}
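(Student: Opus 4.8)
The plan is to reduce the whole computation to the first four moments of the centered variables $Z_i := Y_i - \mu$, which satisfy $\e(Z_i)=0$, $\e(Z_i^2)=\sigma^2$, and $\e(Z_i^4)=\tkappa\sigma^4$. Since $s^2_n$ is unchanged when a constant is added to every $Y_i$, I may as well assume $\mu=0$ and work with the $Z_i=Y_i$ directly. Writing $\bar Z = n^{-1}\sum_{i=1}^n Z_i$ and using the standard identity $\sum_{i=1}^n (Z_i-\bar Z)^2 = \sum_{i=1}^n Z_i^2 - n\bar Z^2$, set $T := (n-1)s^2_n = \sum_i Z_i^2 - n\bar Z^2$. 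Unbiasedness is then immediate: $\e\bigl(\sum_i Z_i^2\bigr) = n\sigma^2$ while $\e\bigl(n\bar Z^2\bigr) = n\,\var(\bar Z) = \sigma^2$ by independence, so $\e(T) = (n-1)\sigma^2$ and hence $\e(s^2_n)=\sigma^2$.

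Because $\var(s^2_n) = (n-1)^{-2}\e(T^2) - \sigma^4$, the task is to evaluate $\e(T^2)$. I would expand $T = \sum_i Z_i^2 - n^{-1}\sum_{i,j}Z_iZ_j$, square it, and take expectations term by term. The key simplification comes from independence together with $\e(Z_i)=0$: in the resulting multi-index sums, any monomial in which some index appears exactly once (or exactly three times) has expectation zero, since it factors through an isolated $\e(Z_i)=0$. What remains are only two kinds of terms --- those in which all active indices coincide, each contributing a factor $\e(Z_i^4)=\tkappa\sigma^4$, and those in which the active indices split into two distinct pairs, each contributing $\e(Z_i^2)^2=\sigma^4$ --- so the problem collapses to counting, in each of the three pieces of $T^2$, how many ordered index tuples fall into each category. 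Collecting the coefficients should give $\e(T^2) = \tfrac{(n-1)^2}{n}\,\tkappa\sigma^4 + \tfrac{(n-1)(n^2-2n+3)}{n}\,\sigma^4$; dividing by $(n-1)^2$, subtracting $\sigma^4$, and simplifying then produces $\var(s^2_n) = \tfrac{\sigma^4}{n}\bigl(\tkappa - \tfrac{n-3}{n-1}\bigr)$.

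The one real obstacle is bookkeeping: the middle and last pieces of $T^2$ each contain a double sum, so one must enumerate carefully how many ordered tuples of indices are ``all equal'' versus ``two distinct pairs,'' and keep track of the partial cancellations across the three pieces (the factors $1$, $-2/n$, and $1/n^2$ multiplying them). A cleaner but equivalent alternative is to note that $s^2_n$ is the degree-two $U$-statistic with symmetric kernel $h(y_1,y_2)=(y_1-y_2)^2/2$ and apply the classical variance formula $\var(U_n) = \binom{n}{2}^{-1}\bigl[2(n-2)\zeta_1+\zeta_2\bigr]$ with $\zeta_1 = \var\bigl(\e[h(Y_1,Y_2)\mid Y_1]\bigr)$ and $\zeta_2=\var\bigl(h(Y_1,Y_2)\bigr)$; both $\zeta_1$ and $\zeta_2$ are again simple combinations of $\sigma^2$ and $\tkappa\sigma^4$, and the algebra collapses to the same answer. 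Either way, finiteness of $\tkappa$ is exactly what is needed for all expectations above to be defined, and --- the result being classical (it is quoted from \cite{Mil86}) --- the verification is routine once the moment assumptions are granted.
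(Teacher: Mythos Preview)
Your computation is correct: centering, expanding $T^2$ with $T=\sum_i Z_i^2 - n\bar Z^2$, and collecting the ``all equal'' and ``two distinct pairs'' contributions indeed yields $\e(T^2) = \tfrac{(n-1)^2}{n}\tkappa\sigma^4 + \tfrac{(n-1)(n^2-2n+3)}{n}\sigma^4$, from which the stated variance follows; the $U$-statistic alternative you mention is equally valid.

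There is, however, nothing to compare against: the paper does not supply a proof of this theorem at all. It is quoted verbatim as a background fact with the citation \cite[Eq.\ (7.16), p.\ 265]{Mil86}, and the paper simply uses the formula downstream (in Lemma~\ref{propCant}). So your proposal goes beyond what the paper does --- you have written out the classical derivation that the cited reference contains, whereas the authors are content to invoke it.
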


\section{Two-stage confidence interval}\label{sec:twostage}

Our two-stage procedure works as follows.
In the first stage, we take a sample of 
independent values $Y_1,\dots,Y_{n_\sigma}$
from the distribution of $Y$.
From this sample we compute the sample variance, 
$s^2_{n_\sigma}$, according to 
\eqref{eq:samplevar} and estimate
the variance of $Y_i$ by
$\hat\sigma^2 = \fudge^2\hat s_{n_\sigma}^2$, where
$\fudge^2>1$ is a ``variance inflation factor''
that will reduce the probability that we have
underestimated $\sigma^2=\var(Y)$.
For the second stage, we use the estimate
$\hat \sigma^2$ as if it were the true
variance of $Y_i$ and use
Berry-Esseen theorem to obtain a suitable
sample size, $n_{\mu}$, for computing the sample average, $\hmu$, that satisfies the fixed with confidence interval \eqref{confint}.

The next two subsections give details of
these two steps that will let us bound
their error probabilities. Then we give
a theorem on the method as a whole.

\subsection{Conservative variance estimates}

We need to ensure that our first stage estimate of the
variance $\sigma^2$ is not too small. The following
result bounds the probability of such an underestimate.

\begin{lemma}\label{propCant} 
Let $Y_1,\dots,Y_n$ be IID random variables with variance
$\sigma^2>0$ and kurtosis $\kappa$.
Let $s^2_n$ be the sample variance 
defined at \eqref{eq:samplevar}, and let $\tilde\kappa=\kappa+3$.  Then
\begin{subequations} \label{sampvarbd}
\begin{gather}\label{sampvarup}
\Prob\left[s^2_n < \sigma^2 \left\{1 + \sqrt{\left ( \tilde\kappa  - \frac{n-3}{n-1}\right)\left(\frac{1-\alpha}{\alpha n}\right)}\right\} \right] \ge 1 - \alpha, \\
\label{sampvarlo}
\Prob\left[s^2_n > \sigma^2 \left\{1 - \sqrt{\left ( \tilde\kappa  - \frac{n-3}{n-1}\right)\left(\frac{1-\alpha}{\alpha n}\right)}\right\} \right] \ge 1 - \alpha.
\end{gather}
\end{subequations}
\end{lemma}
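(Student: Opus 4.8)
The plan is to apply Cantelli's inequality (Theorem~\ref{Can}) to the sample variance $s_n^2$, treated as a random variable whose mean and variance are supplied by Theorem~\ref{Varvarthm}. That theorem gives $\e(s_n^2)=\sigma^2$ and $\var(s_n^2)=\frac{\sigma^4}{n}\bigl(\tilde\kappa-\frac{n-3}{n-1}\bigr)$, and since $\tilde\kappa = \e[(Y-\mu)^4]/\sigma^4 \ge 1$ (by Jensen's inequality applied to $t\mapsto t^2$) while $\frac{n-3}{n-1} < 1$, this variance is strictly positive whenever $\sigma^2>0$. Write $v := \var(s_n^2)$ for brevity. Note also that the hypothesis is phrased in terms of the kurtosis $\kappa$, whereas Theorem~\ref{Varvarthm} uses the modified kurtosis; these are reconciled by the stated relation $\tilde\kappa = \kappa+3$.

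First I would prove the upper bound \eqref{sampvarup}. Applying Cantelli's inequality with $Z = s_n^2$ (mean $\sigma^2$, variance $v$) and a parameter $a\ge 0$ gives $\Prob[s_n^2 - \sigma^2 \ge a] \le v/(a^2 + v)$. Setting the right-hand side equal to $\alpha$ and solving for $a$ yields $a = \sqrt{v(1-\alpha)/\alpha}$; substituting the expression for $v$ gives $a = \sigma^2\sqrt{\bigl(\tilde\kappa-\frac{n-3}{n-1}\bigr)\frac{1-\alpha}{\alpha n}}$, which is exactly the term multiplying $\sigma^2$ inside the braces of \eqref{sampvarup}. Passing to the complement, $\Prob[s_n^2 < \sigma^2 + a]\ge 1-\alpha$, which is \eqref{sampvarup}.

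The lower bound \eqref{sampvarlo} follows from the same argument applied to $-s_n^2$ (equivalently, to the left tail of $s_n^2$): Cantelli's inequality gives $\Prob[\sigma^2 - s_n^2 \ge a] \le v/(a^2+v)$, and with the identical choice of $a$ the right-hand side is again $\alpha$, so $\Prob[s_n^2 > \sigma^2 - a]\ge 1-\alpha$, which is \eqref{sampvarlo}.

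There is no serious obstacle here; the only points requiring a little care are (i) confirming $\var(s_n^2)>0$ so that the chosen $a$ is positive and the bound is nonvacuous, which is handled by the observation $\tilde\kappa\ge 1$ above, and (ii) keeping the strict-versus-nonstrict inequalities consistent when passing to complements, so that the $\ge$ on the event side of Cantelli's inequality becomes the strict $<$ (resp.\ $>$) that appears in the statement.
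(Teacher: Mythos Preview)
Your proposal is correct and follows essentially the same route as the paper: apply Cantelli's inequality to $s_n^2$ (and to $-s_n^2$ for the lower tail) with $a=\sqrt{\var(s_n^2)(1-\alpha)/\alpha}$, using Theorem~\ref{Varvarthm} for the mean and variance of $s_n^2$. Your added remarks on the positivity of $\var(s_n^2)$ via $\tilde\kappa\ge 1$ and on strict versus nonstrict inequalities are sound refinements that the paper leaves implicit.
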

\begin{proof} Applying Theorem \ref{Varvarthm} and choosing
$$a=\sqrt{\var(s^2_n) \frac{1-\alpha}{\alpha} } = \sigma^2\sqrt{\left(\tkappa-\frac{n-3}{n-1}\right)\left(\frac{1-\alpha}{\alpha n}\right)} >0,
$$
it follows from Cantelli's inequality (Theorem  \ref{Can})  that
\begin{multline*}
\Prob\left[s^2_n-\sigma^2 \geq
\sigma^2\sqrt{\left(\tkappa-\frac{n-3}{n-1}\right)\left(\frac{1-\alpha}{\alpha n}\right)} \right]  = \Prob\left[s^2_n-\sigma^2 \geq
a \right]\\
 \leq \frac{\var(s^2_n)}{a^2+\var(s^2_n)} 
=\frac{\var(s^2_n)}{\var(s^2_n) \frac{1-\alpha}{\alpha}+\var(s^2_n)}  
=\frac{1}{\left(\frac{1-\alpha}{\alpha}\right)+1}=\alpha.
\end{multline*}
Then \eqref{sampvarup} follows directly.  By a similar argument, applying Cantelli's inequality to the expression $\Prob\left[-s^2_n+\sigma^2 \ge a \right]$ implies  \eqref{sampvarlo}. \qed
\end{proof}

Using Lemma~\ref{propCant} we can bound the probability
that $\hat\sigma^2 = \fudge^2 s^2_{n_\sigma}$ overestimates $\sigma^2$.
Equation~\eqref{sampvarup} implies that
\begin{equation*} 
\Prob \left[\frac{s^2_{n_{\sigma}}}{1 - \sqrt{\left ( \tilde\kappa  - \frac{n_{\sigma}-3}{n_{\sigma}-1}\right)\left(\frac{1-\alpha}{\alpha n_{\sigma}}\right)}} > \sigma^2 \right] \ge 1 - \alpha.
\end{equation*}
Thus, it makes sense for us to require the modified kurtosis, $\tkappa$, to be small enough, relative to $n_{\sigma}$, $\alpha$, and $\fudge$, in order to ensure 
that $\Pr(\hat\sigma^2 > \sigma^2) \ge 1-\alpha$. Specifically, we require
$$
\frac{1}{1 - \sqrt{\left ( \tilde\kappa  - \frac{n_{\sigma}-3}{n_{\sigma}-1}\right)\left(\frac{1-\alpha}{\alpha n_{\sigma}}\right)}} \le \fudge^2,
$$
or equivalently,
\begin{equation}
\label{kappamaxdef}
\tilde\kappa \le \frac{n_{\sigma}-3}{n_{\sigma}-1} + \left(\frac{ \alpha n_{\sigma}}{1-\alpha}\right) \left(1 - \frac{1}{\fudge^2}\right)^2 =: \tilde\kappa_{\max} (\alpha,n_{\sigma},\fudge). 
\end{equation}
This condition is the explicit version of \eqref{kurtassump} mentioned in the introduction.

\subsection{Conservative interval widths}

Here we consider how to choose the sample size
$n_\mu$ to get the desired coverage level
from an interval with half-length at most $\varepsilon$.
We suppose here that $\sigma$ is known.  
In practice we will use a conservative (biased high) estimate
for $\sigma$.

First, if the CLT held exactly and not just asymptotically,
then we could use a CLT sample size of
$$
N_{\mathrm{CLT}}(\varepsilon,\sigma,\alpha)
= 
\Bigl\lceil
\Bigl(
\frac{z_{\alpha/2}\sigma}{\varepsilon}
\Bigr)^2
\Bigr\rceil
$$
independent values of $Y_i$ in an interval
like the one in~\eqref{eq:99ci}.

Given knowledge of $\sigma$, but no assurance
of a Gaussian distribution for $\hat\mu_n$, we
could instead select a sample size based on
Chebychev's inequality (Theorem \ref{Chebineqthm}).  Taking
\begin{equation}\label{NCdef}
N_{\text{Cheb}}(\varepsilon,\sigma,\alpha)
= 
\Bigl\lceil\frac{\sigma^2}{\alpha\varepsilon^2}\Bigr\rceil
\end{equation}
IID observations of $Y$ gives the confidence interval \eqref{confint}.
Naturally $N_{\text{Cheb}}\ge N_{\text{CLT}}$.

Finally, we could use the non-uniform Berry-Esseen
inequality from Theorem~\ref{BE}.
This inequality requires a finite scaled third moment
$M_3=E\abs{Y_i - \mu}^3/\sigma^3$.
If $\hmu_n$ denotes a sample mean of $n$ IID random instances of $Y$, then the non-uniform Berry-Esseen inequality implies that
\begin{align} 
\nonumber
\Prob\left[\abs{\mu-\hmu_n}  \le \varepsilon \right]
&=\Prob\left[\frac{\hmu_n - \mu}{\sigma/\sqrt{n}} \le \frac{\sqrt{n}\varepsilon}{\sigma} \right]-\Prob\left[\frac{\hmu_n - \mu}{\sigma/\sqrt{n}} < -\frac{\sqrt{n}\varepsilon}{\sigma}\right]\\ 
\nonumber
&\ge \left[\Phi(\sqrt{n}\varepsilon/\sigma)-\Delta_n(\sqrt{n}\varepsilon/\sigma,M_3)\right] \\
\nonumber
&\qquad \qquad -\left[\Phi(-\sqrt{n}\varepsilon/\sigma) + \Delta_n(-\sqrt{n}\varepsilon/\sigma,M_3)\right]\\
&=1-2[\Phi(-\sqrt{n}\varepsilon/\sigma) + \Delta_n(\sqrt{n}\varepsilon/\sigma,M_3)], \label{BEresult}
\end{align}
since $\Delta_n(-x,M_3)=\Delta_n(x,M_3)$.  The probability of
making an error no greater than $\varepsilon$ is bounded below by $1-\alpha$, i.e., the fixed width confidence interval \eqref{confint} holds with $\hmu=\hmu_n$, provided $n \ge N_{\text{BE}}(\varepsilon,\sigma,\alpha,M_3)$, where the Berry-Esseen sample size is
\begin{equation}\label{NB}
N_{\text{BE}}(\varepsilon,\sigma,\alpha,M_3) := \min \left \{ n \in \natu : \Phi\left(-\sqrt{n}\varepsilon/\sigma  \right)+\Delta_n(\sqrt{n}\varepsilon/\sigma,M_3)
\le \frac{\alpha}{2} \right \}.
\end{equation}
To compute $N_{\text{BE}}(\varepsilon,\sigma,\alpha,M_3)$, we need to know
$M_3$. In practice, substituting an upper
bound on $M_3$ yields an upper
bound on the necessary sample size.

Note that if the $\Delta_n$ term in \eqref{NB} were absent, $N_{\text{BE}}$ would correspond to the CLT sample size $N_{\text{CLT}}$, and in general $N_{\text{BE}}>N_{\text{CLT}}$.   It is possible that in some situations
$N_{\text{BE}}>N_{\text{Cheb}}$ might
hold, and in such cases we could use $N_{\text{Cheb}}$
instead of $N_{\text{BE}}$.

\subsection{Algorithm and Proof of Its Success}

In detail, the two-stage algorithm works
as described below.

\begin{algo}[Two Stage] \label{twostagealgo} The user specifies
four quantities:
\begin{itemize}
\item 
an initial sample size for variance estimation, $n_\sigma \in \{2,3, \ldots\}$,
\item
a variance inflation factor $\fudge^2\in(1,\infty)$,
\item
an uncertainty $\alpha\in(0,1)$, and,
\item
an error tolerance or confidence interval half-width, $\varepsilon>0$.
\end{itemize}

At the first stage of the algorithm,
$Y_1,\dots,Y_{n_\sigma}$ are sampled independently
from the same distribution as $Y$.
Then the conservative variance estimate, $\hat\sigma^2 = \fudge^2 s^2_{n_\sigma}$,
is computed in terms of the sample variance, $s^2_{n_\sigma}$, defined by \eqref{eq:samplevar}.

To prepare for the second stage of the algorithm
we compute $\tilde\alpha = 1-\sqrt{1-\alpha}$
and then $\tilde\kappa_{\max} = \tilde\kappa_{\max}(\tilde\alpha,n_\sigma,\fudge)$
using equation~\eqref{kappamaxdef}.
The sample size for the second stage is
\begin{equation} \label{nmudef}
n_\mu = N_{\mu}(\varepsilon,\hsigma,\tilde\alpha,\tilde\kappa_{\max}^{3/4}),
\end{equation}
where
\begin{equation} \label{NCBdef}
N_{\mu}(\varepsilon,\sigma,\alpha,M) 
:= \max\bigl(1,\min\bigl(N_{\text{Cheb}}(\varepsilon,\sigma,\alpha), 
N_{\text{BE}}(\varepsilon,\sigma,\alpha,M) \bigr) \bigr).
\end{equation} 
Recall that
$N_{\text{Cheb}}$ is defined in \eqref{NCdef} and  $N_{\text{BE}}$ 
is defined in \eqref{NB}.  

After this preparation, the second stage is to sample
$Y_{n_\sigma+1},\dots,Y_{n_\sigma+n_\mu}$ independently
from the distribution of $Y$, and independently of $Y_{1},\dots,Y_{n_\sigma}$.  The algorithm then returns the sample mean,
\begin{align}\label{eq:theestimate}
\hmu = \frac1{n_\mu}\sum_{i=n_\sigma+1}^{n_\sigma+n_\mu}Y_i.
\end{align}
\end{algo}

\bigskip

The success of this algorithm is guaranteed in the following theorem.  The main assumption needed is an upper bound on the kurtosis.

\begin{theorem} \label{mainadaptthm} 
Let $Y$ be a random variable with mean $\mu$, and either zero variance or positive variance with modified kurtosis $\tkappa \le \tilde\kappa_{\max}(\tilde\alpha,n_\sigma,\fudge)$.  It follows that Algorithm \ref{twostagealgo} above yields an estimate
$\hat\mu$ given by~\eqref{eq:theestimate} which satisfies the fixed width confidence interval condition
$$\Pr( |\hat\mu-\mu|\le\varepsilon)\ge 1-\alpha.$$
\end{theorem}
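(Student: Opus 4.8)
The plan is to budget the total failure probability $\alpha$ across the two stages, allotting $\talpha=1-\sqrt{1-\alpha}$ to each and exploiting the identity $(1-\talpha)^2=1-\alpha$. First I would dispose of the degenerate case: if $\var(Y)=0$ then $Y=\mu$ almost surely, every sampled value equals $\mu$, and since the algorithm always draws $n_\mu\ge 1$ second-stage samples (the outer $\max(1,\cdot)$ in~\eqref{NCBdef}), the returned $\hmu$ of~\eqref{eq:theestimate} equals $\mu$ and the conclusion is immediate. Henceforth assume $\sigma^2=\var(Y)>0$ and $\tkappa\le\tkappa_{\max}(\talpha,n_\sigma,\fudge)$.

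Write $\mathcal{F}_1=\sigma(Y_1,\dots,Y_{n_\sigma})$. The inflated estimate $\hsigma^2=\fudge^2 s^2_{n_\sigma}$, and hence the second-stage size $n_\mu$ from~\eqref{nmudef}, are $\mathcal{F}_1$-measurable, while $Y_{n_\sigma+1},\dots,Y_{n_\sigma+n_\mu}$ are sampled independently of $\mathcal{F}_1$; thus, conditionally on $\mathcal{F}_1$, the estimate $\hmu$ is an average of $n_\mu$ IID copies of $Y$ with $n_\mu$ frozen, and $\Prob(\abs{\mu-\hmu}\le\varepsilon)=\e[\Prob(\abs{\mu-\hmu}\le\varepsilon\mid\mathcal{F}_1)]$. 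Next I would introduce the good event $A=\{\hsigma^2\ge\sigma^2\}\in\mathcal{F}_1$ and show $\Prob(A)\ge 1-\talpha$: apply Lemma~\ref{propCant}, specifically~\eqref{sampvarlo}, with $\alpha$ replaced by $\talpha$ and $n$ by $n_\sigma$, to get $\Prob[\,s^2_{n_\sigma}>\sigma^2(1-\sqrt c\,)\,]\ge 1-\talpha$ with $c=(\tkappa-\tfrac{n_\sigma-3}{n_\sigma-1})(\tfrac{1-\talpha}{\talpha n_\sigma})\ge 0$ (nonnegative since $\tkappa\ge 1\ge\tfrac{n_\sigma-3}{n_\sigma-1}$ by Jensen), and observe that the defining inequality~\eqref{kappamaxdef} for $\tkappa_{\max}(\talpha,n_\sigma,\fudge)$ is exactly the statement that $\fudge^2(1-\sqrt c\,)\ge 1$, so under the kurtosis hypothesis $\hsigma^2=\fudge^2 s^2_{n_\sigma}\ge\sigma^2$ on that event.

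The crux is to prove $\Prob(\abs{\mu-\hmu}\le\varepsilon\mid\mathcal{F}_1)\ge 1-\talpha$ on $A$. On $A$ we have $\hsigma\ge\sigma$, and also $M_3:=\e\abs{Y-\mu}^3/\sigma^3\le(\e(Y-\mu)^4/\sigma^4)^{3/4}=\tkappa^{3/4}\le\tkappa_{\max}^{3/4}$ by the power-mean (Lyapunov) inequality together with the hypothesis; in particular $M_3<\infty$, so Theorem~\ref{BE} applies. Since $n_\mu=\min(N_{\text{Cheb}}(\varepsilon,\hsigma,\talpha),N_{\text{BE}}(\varepsilon,\hsigma,\talpha,\tkappa_{\max}^{3/4}))$ on $A$ (both arguments of the $\min$ being at least $1$, so the outer $\max(1,\cdot)$ is inactive), I would split into two cases. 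If $n_\mu=N_{\text{Cheb}}(\varepsilon,\hsigma,\talpha)=\lceil\hsigma^2/(\talpha\varepsilon^2)\rceil$, then Chebychev's inequality (Theorem~\ref{Chebineqthm}) applied to the conditional average gives $\Prob[\abs{\hmu-\mu}\ge\varepsilon\mid\mathcal{F}_1]\le\sigma^2/(n_\mu\varepsilon^2)\le\hsigma^2/(n_\mu\varepsilon^2)\le\talpha$. If instead $n_\mu=N_{\text{BE}}(\varepsilon,\hsigma,\talpha,\tkappa_{\max}^{3/4})$, then~\eqref{NB} gives $\Phi(-\sqrt{n_\mu}\varepsilon/\hsigma)+\Delta_{n_\mu}(\sqrt{n_\mu}\varepsilon/\hsigma,\tkappa_{\max}^{3/4})\le\talpha/2$; replacing $\hsigma$ by the smaller $\sigma$ only enlarges the argument $\sqrt{n_\mu}\varepsilon/\sigma$, hence only shrinks the term $\Phi(-\sqrt{n_\mu}\varepsilon/\sigma)$, and since each expression inside the $\min$ defining $\Delta_{n_\mu}$ in Theorem~\ref{BE} is nonincreasing in $\abs{x}$ and nondecreasing in the third-moment argument, replacing $\tkappa_{\max}^{3/4}$ by the smaller $M_3$ as well only shrinks $\Delta_{n_\mu}$; hence $\Phi(-\sqrt{n_\mu}\varepsilon/\sigma)+\Delta_{n_\mu}(\sqrt{n_\mu}\varepsilon/\sigma,M_3)\le\talpha/2$, and the chain~\eqref{BEresult}, valid for any fixed sample size, yields $\Prob(\abs{\mu-\hmu}\le\varepsilon\mid\mathcal{F}_1)\ge 1-2(\talpha/2)=1-\talpha$.

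Finally I would assemble the pieces. Since $A\in\mathcal{F}_1$,
\[
\Prob(\abs{\mu-\hmu}\le\varepsilon)=\e\bigl[\Prob(\abs{\mu-\hmu}\le\varepsilon\mid\mathcal{F}_1)\bigr]\ge\e\bigl[\mathbf{1}_A(1-\talpha)\bigr]=(1-\talpha)\Prob(A)\ge(1-\talpha)^2=1-\alpha,
\]
the last equality being the calibration $\talpha=1-\sqrt{1-\alpha}$. The one genuinely delicate point is the case $n_\mu=N_{\text{BE}}$ above: one must check carefully that decreasing the standard deviation and the third-moment surrogate simultaneously keeps the left-hand side of the defining inequality in~\eqref{NB} below $\talpha/2$, which rests squarely on the explicit monotonicity of the Berry--Esseen majorant $\Delta_n(x,M_3)$ of Theorem~\ref{BE} in $\abs{x}$ and in $M_3$. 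Everything else is the routine conditioning argument and the bookkeeping that combines the two uncertainty budgets $\talpha$ into $\alpha$.
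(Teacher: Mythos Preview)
Your proof is correct and follows essentially the same two-stage conditioning argument as the paper: budget $\talpha$ to each stage, use Cantelli via Lemma~\ref{propCant} to certify $\hsigma\ge\sigma$ with probability $\ge 1-\talpha$, bound $M_3\le\tkappa^{3/4}\le\tkappa_{\max}^{3/4}$, and combine through $\e[\Prob(\cdot\mid\mathcal{F}_1)]$. You are in fact more careful than the paper's terse proof on two points---you explicitly treat the $n_\mu=N_{\text{Cheb}}$ branch and you spell out the monotonicity of $\Phi(-x)$ and $\Delta_n(x,M_3)$ in $\abs{x}$ and $M_3$ needed to pass from $\hsigma$ to $\sigma$---both of which the paper leaves implicit in its one-line appeal to~\eqref{BEresult}.
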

\begin{proof}
\smartqed  
If $\sigma^2=0$, then $s_{n_\sigma}^2=0$, $n_\mu=1$ and $\hmu=\mu$ with probability one.  Now consider the case of positive variance.
The first stage yields a variance estimate satisfying
$
\Pr( \hsigma^2 >\sigma^2)\ge 1-\tilde\alpha
$
by the argument preceding the kurtosis bound in \eqref{kappamaxdef} applied with uncertainty $\tilde\alpha$.
The second stage yields
$\Pr( |\hat\mu-\mu|\le\varepsilon)\ge 1-\tilde\alpha$
by the Berry-Esseen result~\eqref{BEresult},
so long as $\hat\sigma\ge\sigma$
and $M_3\le \tilde\kappa_{\max}(\tilde\alpha,n_\sigma,\fudge)^{3/4}$.
The second condition holds because $M_3 \le \tkappa^{3/4}$ by Jensen's Inequality \citep[8.4.b]{LinBai10a}.
Thus, in the two-stage algorithm we have
\begin{align*}
\Prob\left(\abs{\hmu-\mu} \le \varepsilon \right) &
= \e\bigl[\Prob\left(\abs{\hmu-\mu} \le \varepsilon \mid \hsigma \right) \bigr] \\
& \ge \e\left[(1-\tilde\alpha) 1_{\sigma\le\hsigma}\right]\\
& \ge (1-\tilde\alpha) (1-\tilde\alpha) = 1-\alpha.\qquad \qquad \qed
\end{align*}
\end{proof}

\begin{remark} As pointed out earlier, the guarantees in this theorem require that the 
modified kurtosis of $Y$ not exceed the specified upper bound $\tkappa_{\max}$.  As it is presented, Algorithm \ref{twostagealgo} takes as inputs, $n_\sigma$, $\fudge$, and $\alpha$, and uses these to compute  $\tkappa_{\max}$ according to \eqref{kappamaxdef}.  The reason for doing so is that one might have a better intuition for $n_\sigma$, $\fudge$, and $\alpha$.  Alternatively, one may specify $n_\sigma$ and $\tkappa_{\max}$ and use \eqref{kappamaxdef} to compute $\fudge$, or specify $\fudge$ and $\tkappa_{\max}$ and use \eqref{kappamaxdef} to compute $n_\sigma$.  
The issue of how one should choose $n_\sigma$, $\fudge$, and $\tkappa_{\max}$ in practice is discussed further in Section \ref{discusssec}.
\end{remark}

\begin{remark} In this algorithm it is possible to choose $n_{\mu}$ much smaller than  $n_\sigma$ if the sample variance is small.  As a practical matter we suggest that if one is willing to invest $n_\sigma$ samples to estimate the variance then one should be willing to invest at least that many additional samples to estimate the mean.  Therefore, in the numerical examples of Section \ref{numerexsec} we use
\begin{equation} \label{NCBpracticaldef}
N_{\mu}(\varepsilon,\sigma,\alpha,M) 
:= \max\bigl( n_{\sigma}, \min\bigl(N_{\text{Cheb}}(\varepsilon,\sigma,\alpha), 
N_{\text{BE}}(\varepsilon,\sigma,\alpha,M) \bigr) \bigr)
\end{equation} 
instead of \eqref{NCBdef} to determine the sample size for the sample mean.  Because the variance is typically harder to estimate accurately than the mean, one may wonder whether $n_\sigma$ should be chosen greater than $n_\mu$.  However, for Monte Carlo simulation we only need the variance to one or two digits accuracy, whereas we typically want to know the mean to a much higher accuracy.  By the error bound following from Chebychev's inequality (Theorem \ref{Chebineqthm}), the definition of $N_{\mu}$ in \eqref{NCBpracticaldef} means that the fixed width confidence interval constructed by Algorithm \ref{twostagealgo} also holds for any random variables, $Y$, with small variance, namely, $\sigma^2 \le \varepsilon^2 \alpha n_{\sigma}$, even if its kurtosis is arbitrarily large.
\end{remark}

\bigskip

As mentioned in the introduction, one frequently encountered case occurs when  $Y$ is a $d$-variate function of a random vector $\vX$. Then $\mu$ corresponds to the multivariate integral in \eqref{muintegral} and Theorem \ref{mainadaptthm} may be interpreted as below:

\begin{corollary} \label{integcor} Suppose that $\rho:\reals^d \to \reals$ is a probability density function, the integrand $f: \reals^d \to \reals$ has finite $\cl_4$ norm as defined in \eqref{Lpnormdef}, and furthermore $f$ lies in the cone $\cc_{\tkappa_{\max}}$ defined in \eqref{conedef}, where $\tkappa_{\max}=\tilde\kappa_{\max}(\tilde\alpha,n_\sigma,\fudge)$.  It follows that Algorithm  \ref{twostagealgo}  yields an estimate,
$\hat\mu$, of the multidimensional integral $\mu$ defined in \eqref{muintegral}, which satisfies the fixed width confidence interval condition
$$\Pr( |\hat\mu-\mu|\le\varepsilon)\ge 1-\alpha.$$
\end{corollary}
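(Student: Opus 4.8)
\emph{Proof proposal.} The plan is to recognize that this corollary is simply Theorem \ref{mainadaptthm} transported to the cubature setting, so the work is almost entirely a matter of translating the hypotheses. First I would set $Y = f(\vX)$ with $\vX \sim \rho$, so that drawing the IID sample $Y_1, Y_2, \ldots$ required by Algorithm \ref{twostagealgo} amounts to drawing IID vectors $\vX_1, \vX_2, \ldots$ from $\rho$ and evaluating $f$ at them, and $\mu = \e(Y) = \int_{\reals^d} f(\vx)\rho(\vx)\,\dif\vx = \mu(f)$ as in \eqref{muintegral}. This identifies the output $\hmu$ of the algorithm with an estimate of the integral.

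Next I would verify that the cone hypothesis $f \in \cc_{\tkappa_{\max}}$ is exactly the bounded-kurtosis hypothesis of Theorem \ref{mainadaptthm}. Using the $\cl_p$-norm identities in \eqref{Lpnormdef}, namely $\sigma^2 = \norm[2]{f-\mu}^2$ and $\tkappa = \norm[4]{f-\mu}^4/\norm[2]{f-\mu}^4$, the defining inequality of $\cc_{\tkappa_{\max}}$ in \eqref{conedef}, $\norm[4]{f-\mu(f)} \le \tkappa_{\max}^{1/4}\norm[2]{f-\mu(f)}$, rearranges (when $\sigma > 0$) to $\tkappa \le \tkappa_{\max} = \tilde\kappa_{\max}(\tilde\alpha,n_\sigma,\fudge)$. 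I would also note that $f \in \cl_4$ forces $\norm[2]{f-\mu} < \infty$ (since $\rho$ is a probability density, $\cl_4 \subseteq \cl_2$ by Jensen's or H\"older's inequality) and that the fourth centered moment is finite, so $Y$ meets the moment requirements of Theorem \ref{mainadaptthm}. The degenerate case $\sigma = 0$, i.e.\ $f$ equal $\rho$-almost everywhere to a constant, is covered separately: then $f$ lies in the cone trivially and falls under the zero-variance branch of Theorem \ref{mainadaptthm}.

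With these identifications in hand, the conclusion follows by invoking Theorem \ref{mainadaptthm} directly: the random variable $Y = f(\vX)$ has mean $\mu$ and either zero variance or positive variance with $\tkappa \le \tilde\kappa_{\max}(\tilde\alpha,n_\sigma,\fudge)$, so Algorithm \ref{twostagealgo} returns $\hmu$ with $\Pr(|\hmu - \mu| \le \varepsilon) \ge 1-\alpha$, which is the assertion of the corollary.

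I do not expect a genuine obstacle here; the only points requiring a moment's care are the bookkeeping around the case $\norm[2]{f-\mu} = 0$ (so that the ratio defining $\tkappa$ is not formed by dividing by zero) and the inclusion $\cl_4 \subseteq \cl_2$, both of which are routine because $\rho$ integrates to one.
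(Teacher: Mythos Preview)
Your proposal is correct and matches the paper's approach: the paper does not give a separate proof of the corollary at all, presenting it simply as the direct reinterpretation of Theorem \ref{mainadaptthm} in the cubature setting with $Y=f(\vX)$. Your write-up just fills in the routine verification that the cone condition \eqref{conedef} is equivalent to the kurtosis bound $\tkappa\le\tkappa_{\max}$ and handles the degenerate $\sigma=0$ case, which is exactly what the paper leaves implicit.
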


\subsection{Cost of the Algorithm} \label{algcostsubsec} The number of function values required by the two-stage Algorithm \ref{twostagealgo} is $n_{\sigma}+n_\mu$, the sum of the initial sample size used to estimate the variance of $Y$ and the sample size used to estimate the mean of $Y$.  Although $n_{\sigma}$ is deterministic, $n_\mu$ is a random variable, and so the cost of this algorithm might be best defined probabilistically.  Moreover, the only random quantity in the formula for $n_{\mu}$ in \eqref{nmudef} is $\hsigma^2$, the upper bound on variance.  Clearly this depends on the unknown population variance, $\sigma^2$, and we expect $\hsigma^2$ not to overestimate $\sigma^2$ by much.  Thus, the algorithm cost is defined below in terms of $\sigma^2$ and the error tolerance (interval half-width) $\varepsilon$.  An upper bound on the cost is then derived in Theorem \ref{costtheorem}.

Let $A$ be any random algorithm that takes as its input, a method for generating random samples, $Y_1, Y_2, \ldots $ with common distribution function $F$ having variance $\sigma^2$ and modified kurtosis $\tkappa$.  Additional algorithm inputs are an error tolerance, $\varepsilon$, an uncertainty, $\alpha$, and a maximum modified kurtosis, $\tkappa_{\max}$.  The algorithm then computes $\hmu=A(F,\varepsilon,\alpha,\tkappa_{\max})$, an approximation to $\mu=\e(Y)$, based on a total of $N_{\text{tot}}(\varepsilon,\alpha,\tkappa_{\max},F)$ samples. The probabilistic cost of the algorithm, with uncertainty $\beta$, for integrands of variance no greater than $\sigma^2_{\max}$ and modified kurtosis no greater than $\tkappa_{\max}$ is defined as 
\begin{equation*}
N_{\text{tot}}(\varepsilon,\alpha,\beta,\tkappa_{\max},\sigma_{\max})
:= \sup_{\substack{\tkappa \le \tkappa_{\max} \\ \sigma \le \sigma_{\max}} } \min\left\{N : \Prob[N_{\text{tot}}(\varepsilon,\alpha,\tkappa_{\max},F) \le N] \ge 1-\beta  \right \}.
\end{equation*}
Note that $\tkappa_{\max}$ is an input to the algorithm, but $\sigma_{\max}$ is not.  The cost of an arbitrary algorithm, $A$ may also depend on other parameters, such as $n_\sigma$ and $\fudge$ in our Algorithm \ref{twostagealgo}, which are related to $\tkappa_{\max}$.  However, this dependence is not shown explicitly to keep the notation simple.

The cost of the particular two-stage Monte Carlo algorithm defined in Algorithm \ref{twostagealgo} is
\begin{equation*}
\sup_{\substack{\tkappa \le \tkappa_{\max} \\ \sigma \le \sigma_{\max}}} \min\left\{N : \Prob(n_{\sigma} + N_{\mu}(\varepsilon,\hsigma,\tilde\alpha,\tilde\kappa_{\max}^{3/4}) \le N) \ge 1-\beta  \right \}.
\end{equation*}
Since $n_{\sigma}$ is fixed, bounding this cost depends on bounding $N_{\mu}(\varepsilon,\hsigma,\tilde\alpha,\tilde\kappa_{\max}^{3/4})$, which depends on $\hsigma$ as given by Algorithm \ref{twostagealgo}.  Moreover, $\hsigma$ can be bounded above using \eqref{sampvarup} in Lemma~\ref{propCant}.  For $\tkappa \le \tkappa_{\max}$, 
\begin{align*}
1-\beta & \le \Prob\left[s^2_{n_{\sigma}} < \sigma^2 \left\{1 + \sqrt{\left ( \tkappa  - \frac{n_{\sigma}-3}{n_{\sigma}-1}\right)\left(\frac{1-\beta}{\beta n_{\sigma}}\right)}\right\} \right] \\
& \le \Prob\left[\hsigma^2 = \fudge^2 s^2_{n_{\sigma}} < \fudge^2\sigma^2 \left\{1 + \sqrt{\left ( \tkappa_{\max}(n_{\sigma},\talpha,\fudge)  - \frac{n_{\sigma}-3}{n_{\sigma}-1}\right)\left(\frac{1-\beta}{\beta n_{\sigma}}\right)}\right\} \right] \\
& = \Prob\left[\hsigma^2 < \sigma^2 v^2(\talpha,\beta,\fudge) \right],
\end{align*}
where
\[
v^2(\talpha,\beta,\fudge) :=  \fudge^2 + \left(\fudge^2 - 1\right)\sqrt{\frac{ \talpha(1-\beta)}{(1-\talpha)\beta} } > 1.
\]
Noting that $N_{\mu}(\varepsilon,\cdot,\tilde\alpha,\tilde\kappa_{\max}^{3/4})$ is a non-decreasing function allows one to derive the following upper bound on the cost of the adaptive Monte Carlo algorithm.

\begin{theorem} \label{costtheorem} The two-stage Monte Carlo algorithm for fixed width confidence intervals based on IID sampling described in Algorithm \ref{twostagealgo} has a probabilistic cost bounded above by 
\begin{multline*}
N_{\rm{tot}}(\varepsilon,\alpha, \beta, \tkappa_{\max}, \sigma_{\max}) \\
\le
N_{\rm{up}}(\varepsilon,\alpha, \beta, \tkappa_{\max}, \sigma_{\max}) :=  n_{\sigma} + N_{\mu}(\varepsilon,\sigma_{\max}v(\talpha,\beta,\fudge),\talpha,\tkappa_{\max}^{3/4}) .
\end{multline*} 
\end{theorem}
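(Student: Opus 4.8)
The plan is to combine the probabilistic upper bound on $\hsigma$ that is already assembled in the paragraphs preceding the theorem with the monotonicity of $N_\mu$ in its variance slot. Fix a distribution $F$ with standard deviation $\sigma\le\sigma_{\max}$ and modified kurtosis $\tkappa\le\tkappa_{\max}$, where $\tkappa_{\max}=\tilde\kappa_{\max}(\talpha,n_\sigma,\fudge)$. Recall that the total sample count used by Algorithm \ref{twostagealgo} is the deterministic $n_\sigma$ plus the random $n_\mu=N_\mu(\varepsilon,\hsigma,\talpha,\tkappa_{\max}^{3/4})$, and that the only randomness in $n_\mu$ enters through $\hsigma^2=\fudge^2 s^2_{n_\sigma}$.

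First I would invoke the display chain just before the theorem, which applies \eqref{sampvarup} of Lemma \ref{propCant} (with uncertainty $\beta$) together with $\tkappa\le\tkappa_{\max}$ to get
\[
\Prob\bigl[\hsigma < \sigma\, v(\talpha,\beta,\fudge)\bigr]\ge 1-\beta ,
\]
where $v^2(\talpha,\beta,\fudge)=\fudge^2+(\fudge^2-1)\sqrt{\talpha(1-\beta)/((1-\talpha)\beta)}$. Next, on the event $\{\hsigma<\sigma\, v(\talpha,\beta,\fudge)\}$ I would use the fact, noted in the text, that $\sigma\mapsto N_\mu(\varepsilon,\sigma,\talpha,\tkappa_{\max}^{3/4})$ is non-decreasing — which holds because $N_{\text{Cheb}}$ and $N_{\text{BE}}$, hence their minimum and the outer maximum with $1$, are each non-decreasing in $\sigma$ — to conclude
\[
n_\mu = N_\mu(\varepsilon,\hsigma,\talpha,\tkappa_{\max}^{3/4}) \le N_\mu(\varepsilon,\sigma\, v(\talpha,\beta,\fudge),\talpha,\tkappa_{\max}^{3/4}) \le N_\mu(\varepsilon,\sigma_{\max} v(\talpha,\beta,\fudge),\talpha,\tkappa_{\max}^{3/4}),
\]
the last inequality using $\sigma\le\sigma_{\max}$ and monotonicity once more.

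Combining these, the event $\{n_\sigma+n_\mu\le N_{\rm{up}}(\varepsilon,\alpha,\beta,\tkappa_{\max},\sigma_{\max})\}$ contains $\{\hsigma<\sigma\, v(\talpha,\beta,\fudge)\}$ and so has probability at least $1-\beta$; hence $\min\{N:\Prob[n_\sigma+N_\mu(\varepsilon,\hsigma,\talpha,\tkappa_{\max}^{3/4})\le N]\ge 1-\beta\}\le N_{\rm{up}}$ for this particular $F$. Since $N_{\rm{up}}$ depends on $F$ only through the bounds $\tkappa_{\max}$ and $\sigma_{\max}$, I would then take the supremum over all admissible $F$ (those with $\tkappa\le\tkappa_{\max}$, $\sigma\le\sigma_{\max}$) to obtain $N_{\rm{tot}}(\varepsilon,\alpha,\beta,\tkappa_{\max},\sigma_{\max})\le N_{\rm{up}}(\varepsilon,\alpha,\beta,\tkappa_{\max},\sigma_{\max})$, which is the claim. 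The degenerate case $\sigma=0$ (where $\hsigma=0$ and $n_\mu=1$) is absorbed by the same inequalities, since $N_\mu(\varepsilon,\cdot,\cdot,\cdot)\ge 1$ always.

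There is no genuinely deep step here: both ingredients — the probabilistic variance bound via Cantelli and the monotonicity of $N_\mu$ — are handed to us in the run-up to the theorem. The one point that needs care is pinning down the direction of monotonicity of $N_{\text{BE}}$ in $\sigma$: a larger $\sigma$ shrinks the argument $\sqrt{n}\,\varepsilon/\sigma$, which simultaneously raises $\Phi(-\sqrt{n}\,\varepsilon/\sigma)$ and the non-uniform term $\Delta_n(\sqrt{n}\,\varepsilon/\sigma,M_3)$, so the defining inequality in \eqref{NB} only gets harder to meet and $N_{\text{BE}}$ can only grow. The other mild subtlety is simply noting that the strict bound $\hsigma<\sigma\, v$ still feeds a non-strict monotone inequality for $N_\mu$; everything else is bookkeeping with the definition of the probabilistic cost.
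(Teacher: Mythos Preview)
Your proposal is correct and follows essentially the same approach as the paper: the argument the paper gives is entirely in the paragraphs preceding the theorem statement, combining the Cantelli-based bound \eqref{sampvarup} (applied with uncertainty $\beta$ and $\tkappa$ replaced by $\tkappa_{\max}$) to control $\hsigma$, and then invoking the monotonicity of $N_\mu(\varepsilon,\cdot,\talpha,\tkappa_{\max}^{3/4})$. Your write-up is in fact more explicit than the paper's, which simply asserts the monotonicity without the justification you supply for $N_{\text{BE}}$.
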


Note that the Chebychev sample size, $N_{\text{Cheb}}$, defined in \eqref{NCdef}, the Berry-Esseen sample size, $N_{\text{BE}}$, defined in \eqref{NB}, and thus $N_\mu$ all depend on $\sigma$ and $\varepsilon$ through their ratio, $\sigma/\varepsilon$.  Thus, ignoring the initial sample used to estimate the variance, $N_{\rm{tot}}(\varepsilon,\alpha, \beta,\tkappa_{\max}, \sigma_{\max})$ is roughly proportional to $\sigma^2_{\max}/\varepsilon^{2}$, even though $\sigma_{\max}$ is not a parameter of the algorithm.  Algorithm \ref{twostagealgo} \emph{adaptively} determines the sample size, and thus the cost, to fit the unknown variance of $Y$. Random variables, $Y$, with small variances will require a lower cost to estimate $\mu$ with a given error tolerance than random variables with large variances.

\begin{figure}
\centering
\begin{minipage}{2.1in}
\centering \includegraphics[width=2.1in]{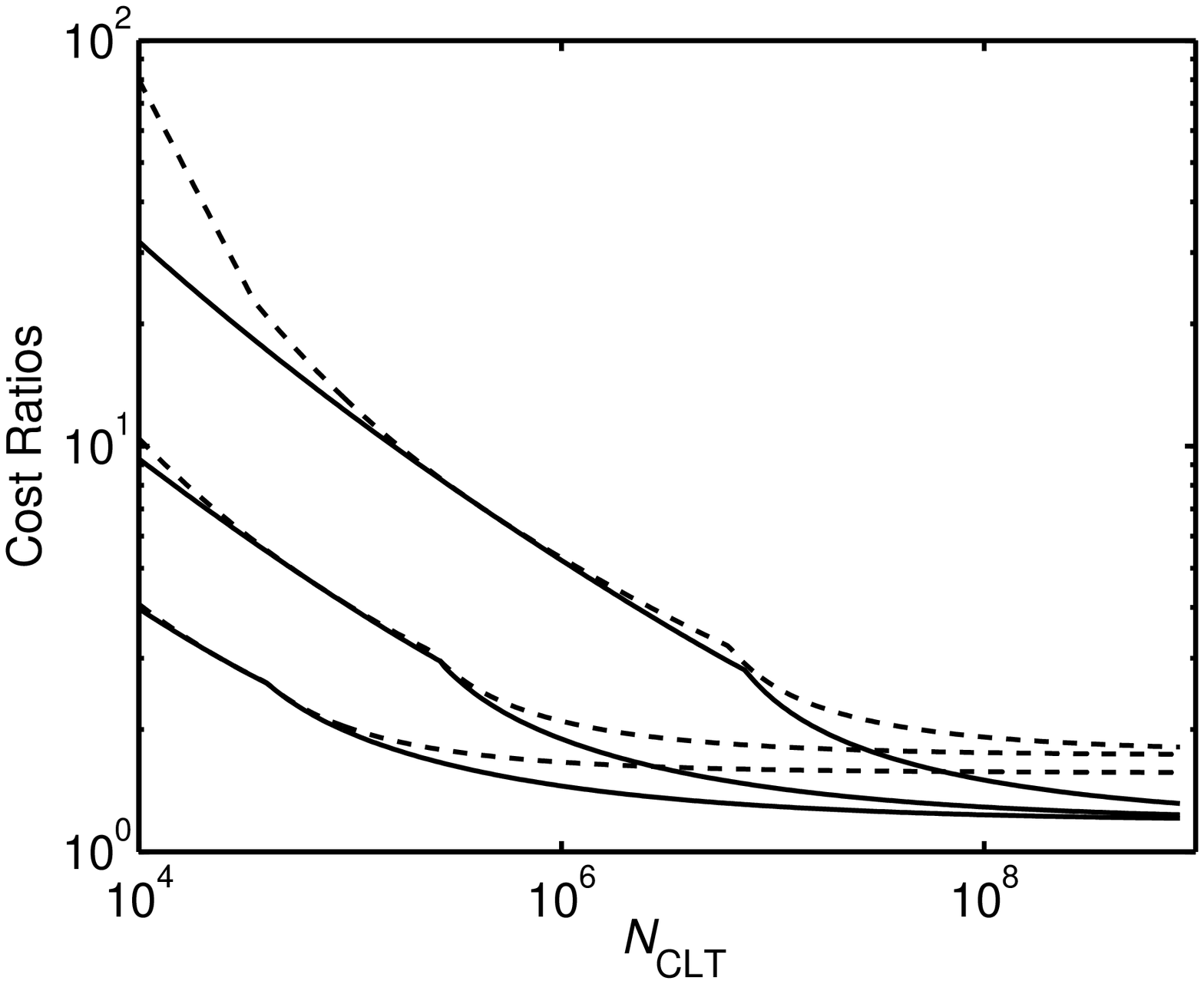} \\
(a)
\end{minipage}
\quad 
\begin{minipage}{2.3in}\centering
\includegraphics[width=2.3in]{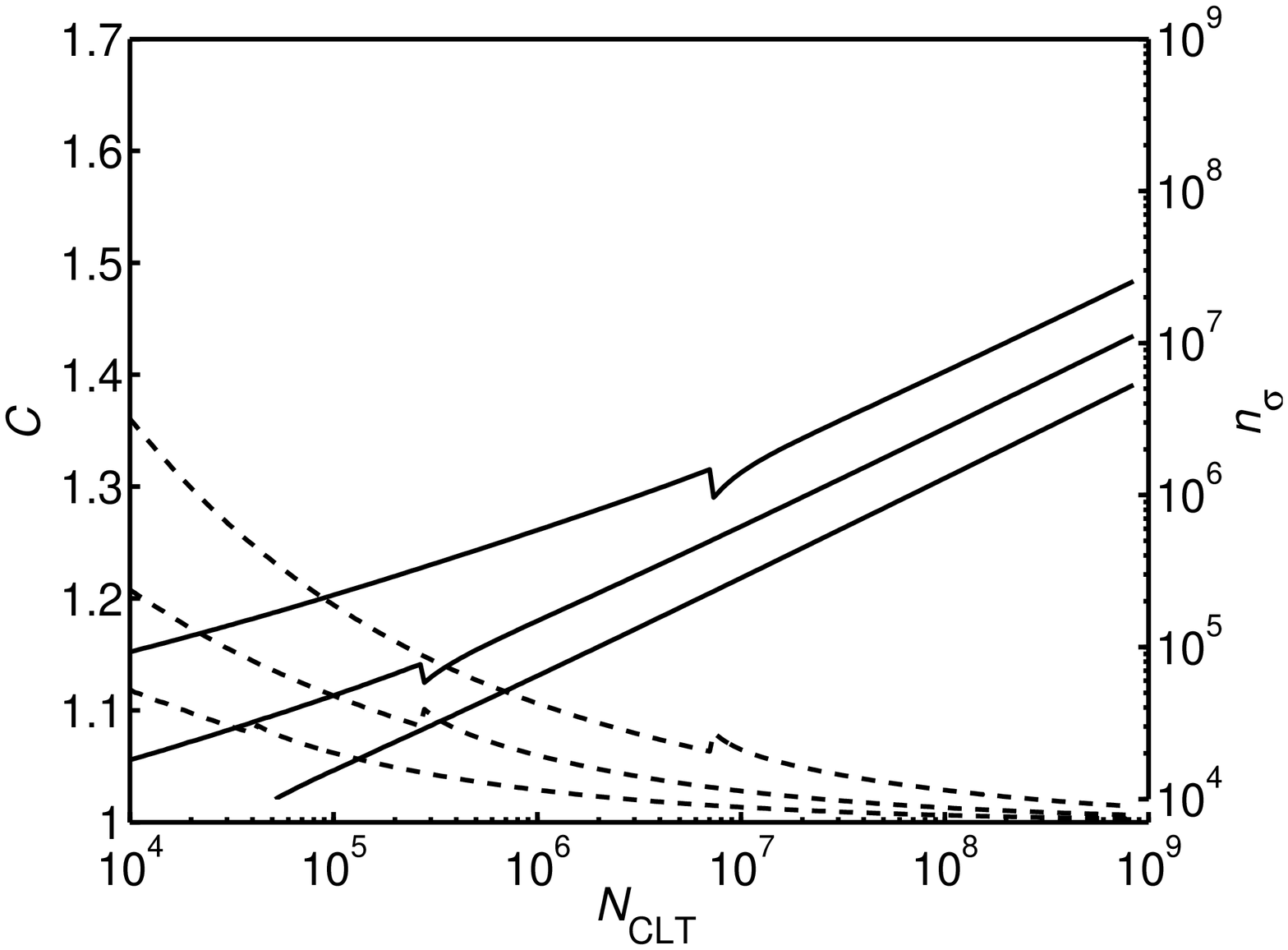}\\
(b)
\end{minipage}
\caption{(a) The cost ratios of $N_{\rm{up}}(\varepsilon,0.01,0.01, \tkappa_{\max}, \sigma)/N_{\mathrm{CLT}}(\varepsilon,\sigma,0.01)$ for $\tkappa_{\max}=2, 10,$ and $100$, with $n_\sigma=4000\tkappa_{\max}$ (dashed) and $n_\sigma$ optimized (solid); (b) the optimal values of $n_\sigma$ (solid) and $\fudge$ (dashed).\label{Costfig}}
\end{figure}

Figure \ref{Costfig}a shows the ratio of the upper bound of the cost, $N_{\rm{up}}(\varepsilon,0.01,0.01, \tkappa_{\max}, \sigma)$, to the ideal CLT cost, $N_{\mathrm{CLT}}(\varepsilon,\sigma,0.01)=\lceil (2.58\sigma/\varepsilon)^2\rceil$, for a range of $\sigma/\varepsilon$ ratios and for $\tkappa_{\max}=2, 10$, and $100$. In these graphs the formula defining $N_{\rm{up}}$ in Theorem \ref{costtheorem} uses the alternative and somewhat costlier formula for $N_{\mu}$ in \eqref{NCBpracticaldef}. The dashed curves in Figure \ref{Costfig}a show these cost ratios with $n_\sigma=4000\tkappa_{\max}$, which corresponds to $\fudge \approx 1.1$.  The solid curves denote the case where $n_\sigma$ and $\fudge$ vary with $\sigma/\varepsilon$ to minimize $N_{\rm{up}}$.  Figure \ref{Costfig}b displays the optimal values of $n_\sigma$ (solid) and $\fudge$ (dashed).  In both figures, higher curves correspond to higher values of $\tkappa_{\max}$. 

Here, $N_{\mathrm{CLT}}$ denotes the ideal cost if one knew the variance of $Y$ a priori and knew that the distribution of the sample mean was close to Gaussian. The cost ratio is the penalty for having a guaranteed fixed width confidence interval in the absence of this knowledge about the distribution of $Y$.  For smaller values of $N_{\mathrm{CLT}}$, equivalently smaller  $\sigma/\varepsilon$, this cost ratio can be rather large.  However the absolute effect of this large penalty is mitigated by the fact that the total number of samples needed is not much.  For larger $N_{\mathrm{CLT}}$, equivalently larger  $\sigma/\varepsilon$, the cost ratio approaches somewhat less than $1.4$ in the case of optimal $n_\sigma$ and $\fudge$, and somewhat less than $2$ for $n_\sigma=1000\tkappa_{\max}$.

The discontinuous derivatives in the curves in Figure \ref{Costfig} arise from the minimum and maximum values arising in formulas \eqref{NB} and \eqref{NCBpracticaldef} for $N_{\text{BE}}$ and $N_\mu$, respectively. Taking the upper dashed curve in Figure \ref{Costfig}a as an example, for $N_{\text{CLT}}$ less than about $3.5\times 10^4$, $N_\mu=n_\sigma$.  For $N_{\text{CLT}}$ from about $3.5\times 10^4$ to about $6\times 10^6$, $N_\mu$ corresponds to the second term in the minimum in the Berry-Esseen inequality, \eqref{NB}, i.e., the non-uniform term.  For $N_{\text{CLT}}$ greater than $6\times 10^6$, $N_\mu$ corresponds to the first term in the minimum in the Berry-Esseen inequality, \eqref{NB}, i.e., the uniform term.

The ideal case of optimizing $n_\sigma$ and $\fudge$ with respect to $\sigma/\varepsilon$ is impractical, since $\sigma$ is not known in advance.  Our suggestion is to choose $\fudge$ around $1.1$, and then choose $n_\sigma$ as large as needed to ensure that $\tkappa_{\max}$ is as large as desired. For example with  $\fudge = 1.1$ and 
$\tkappa_{\max} = 2, 10$, and $100$
we get $n_\sigma = 6593$, $59311,$ and $652417$ respectively.

\section{Numerical Examples} \label{numerexsec}

\subsection{Univariate Fooling Functions for Deterministic Algorithms}

Several commonly used software packages have automatic algorithms for integrating functions of a single variable.  These include 
\begin{itemize} 

\item {\tt quad} in MATLAB \citep{MAT7.12}, adaptive Simpson's rule based on {\tt adaptsim} by \cite{GanGau00a},

\item {\tt quadgk} in MATLAB \citep{MAT7.12}, adaptive Gauss-Kronrod quadrature based on {\tt quadva} by \cite{Sha08a}, and

\item the {\tt chebfun} \citep{TrefEtal12} toolbox for MATLAB \citep{MAT7.12}, which approximates integrals by integrating interpolatory Chebychev polynomial approximations to the integrands.


\end{itemize}

For these three automatic algorithms one can easily probe where they sample the integrand, feed the algorithms zero values, and then construct fooling functions for which the automatic algorithms will return a zero value for the integral.  Figure \ref{foolfunfig} displays these fooling functions for the problem $\mu=\int_0^1 f(x) \, \dif x$ for these three algorithms. Each of these algorithms is asked to provide an answer with an absolute error no greater than $10^{-14}$, but in fact the absolute error is $1$ for these fooling functions.  The algorithms {\tt quad} and {\tt chebfun} sample only about a dozen points before concluding that the function is zero, whereas the algorithm {\tt quadgk} samples a much larger number of points (only those between $0$ and $0.01$ are shown in the plot). 

\begin{figure}
\centering
\begin{minipage}{3.7cm} \centering \includegraphics[width=3.7cm]{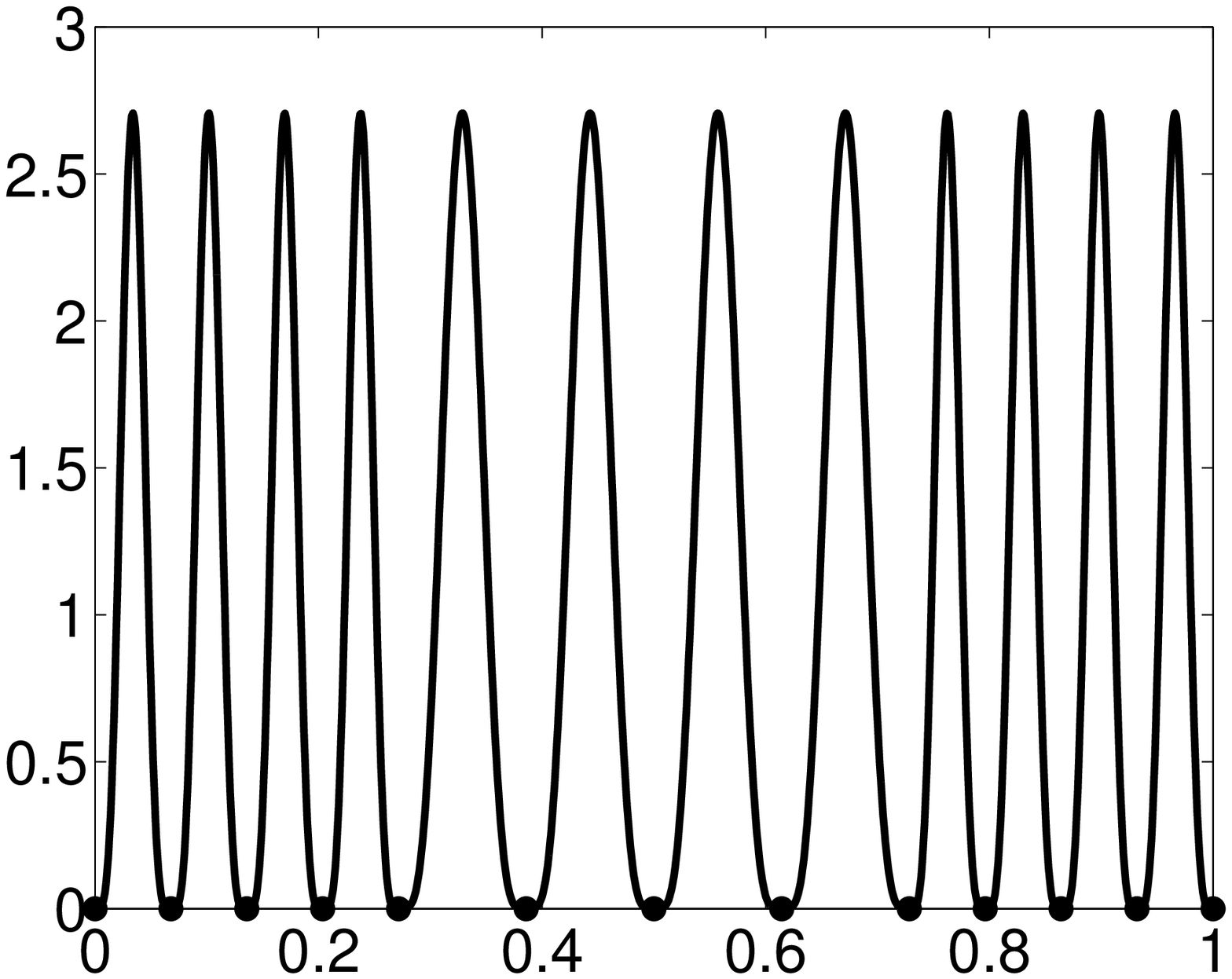} \\ {\tt quad} \end{minipage}
\begin{minipage}{3.7cm} \centering \includegraphics[width=3.7cm]{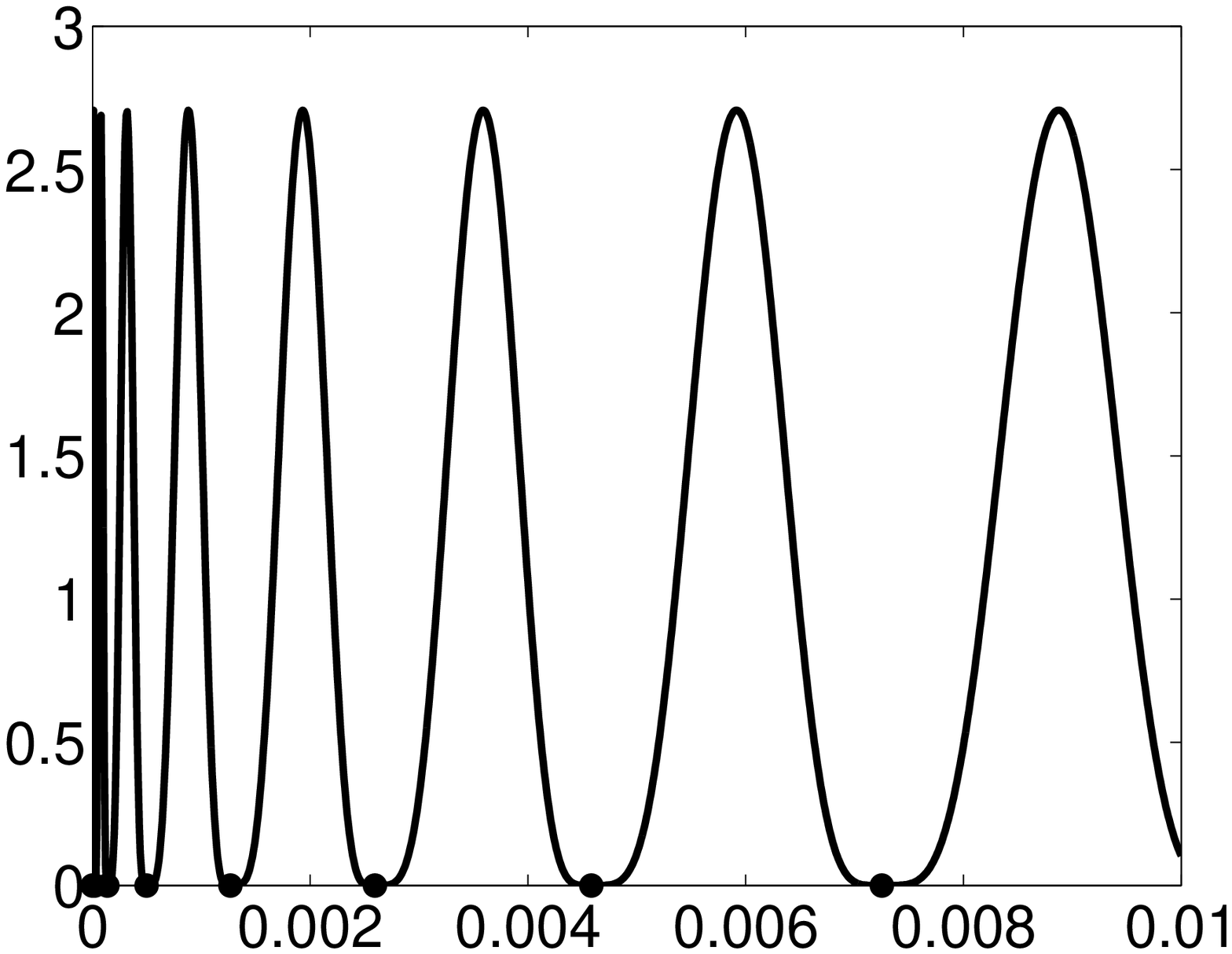} \\ {\tt quadgk} \end{minipage}
\begin{minipage}{3.7cm} \centering \includegraphics[width=3.7cm]{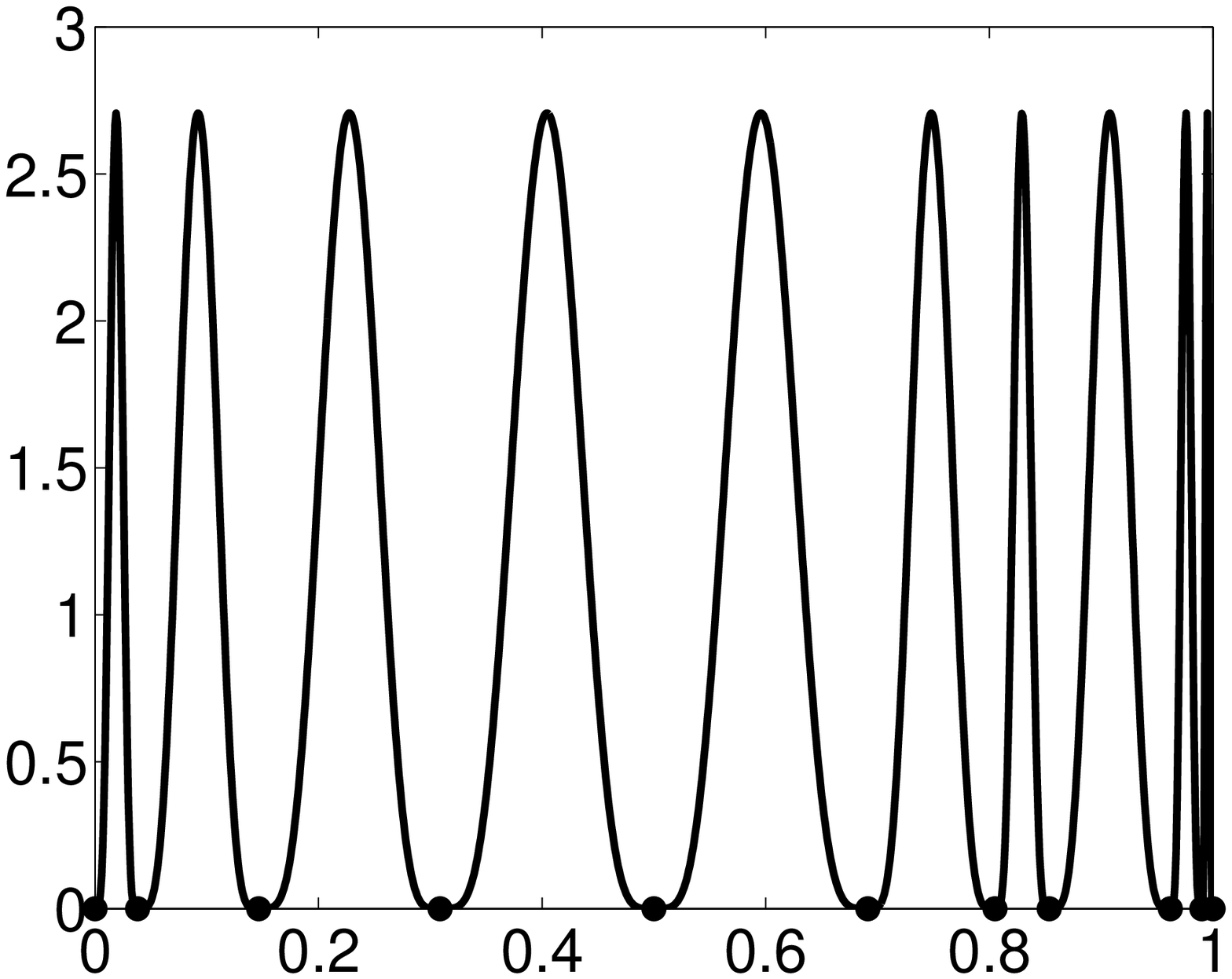} \\ {\tt chebfun} \end{minipage}
\caption{Plots of fooling functions, $f$, with $\mu=\int_0^1 f(x) \, \dif x=1$, but for which the corresponding algorithms return values of $\hmu=0$. \label{foolfunfig}}
\end{figure}

\subsection{Integrating a Single Hump}

Accuracy and timing results have been recorded for the integration problem $\mu=\int_{[0,1]^d} f(\vx) \, \dif \vx$ for a single hump test integrand
\begin{equation} \label{GaussianTestFun}
f(\vx)=a_0 + b_0\prod_{j=1}^d\left[ 1 +b_j \exp \left(-\frac{(x_j-h_j)^2}{c_j^2}\right) \right].
\end{equation}
Here $\vx$ is a $d$ dimensional vector, and $a_0, b_0, \ldots, b_d, c_1, \ldots, c_d, h_1, \ldots, h_d$ are parameters. Figure \ref{GaussianTestFunFig} shows the results of different algorithms being used to integrate $500$ different instances of $f$.  For each instance of $f$, the parameters are chosen as follows:
\begin{itemize} 
\item $b_1, \ldots, b_d \in [0.1,10]$ with $\log(b_j)$ being i.i.d.\ uniform,
\item $c_1, \ldots, c_d \in [10^{-6},1]$ with $\log(c_j)$ being i.i.d.\ uniform,
\item $h_1, \ldots, h_d \in [0,1]$ with $h_j$ being i.i.d.\ uniform,
\item $b_0$ chosen in terms of the $b_1, \ldots, b_d, c_1, \ldots, c_d, h_1, \ldots, h_d$ to make $\sigma^2 =\norm[2]{f-\mu}^2 \in [10^{-2}, 10^2]$, with $\log(\sigma)$ being i.i.d.\ uniform for each instance, and
\item $a_0$ chosen in terms of the $b_0, \ldots, b_d, c_1, \ldots, c_d, h_1, \ldots, h_d$ to make $\mu=1$.
\end{itemize}
These ranges of parameters are chosen so that the algorithms being tested fail to meet the error tolerance a significant number of times.

\begin{figure}
\centering
\begin{minipage}{5.7cm} \centering \includegraphics[width=5.7cm]{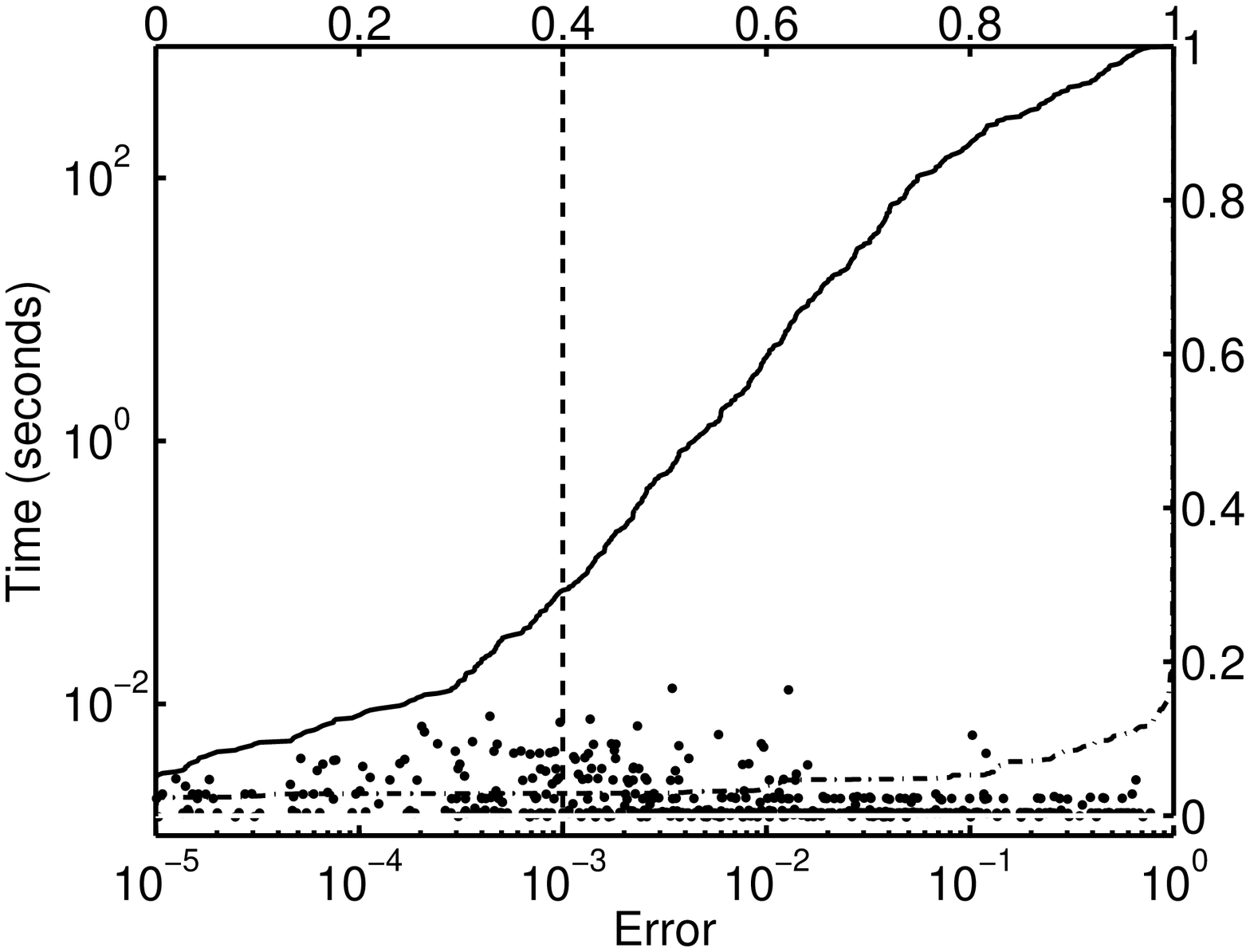} \\ {\tt quad} \end{minipage}
\begin{minipage}{5.7cm} \centering \includegraphics[width=5.7cm]{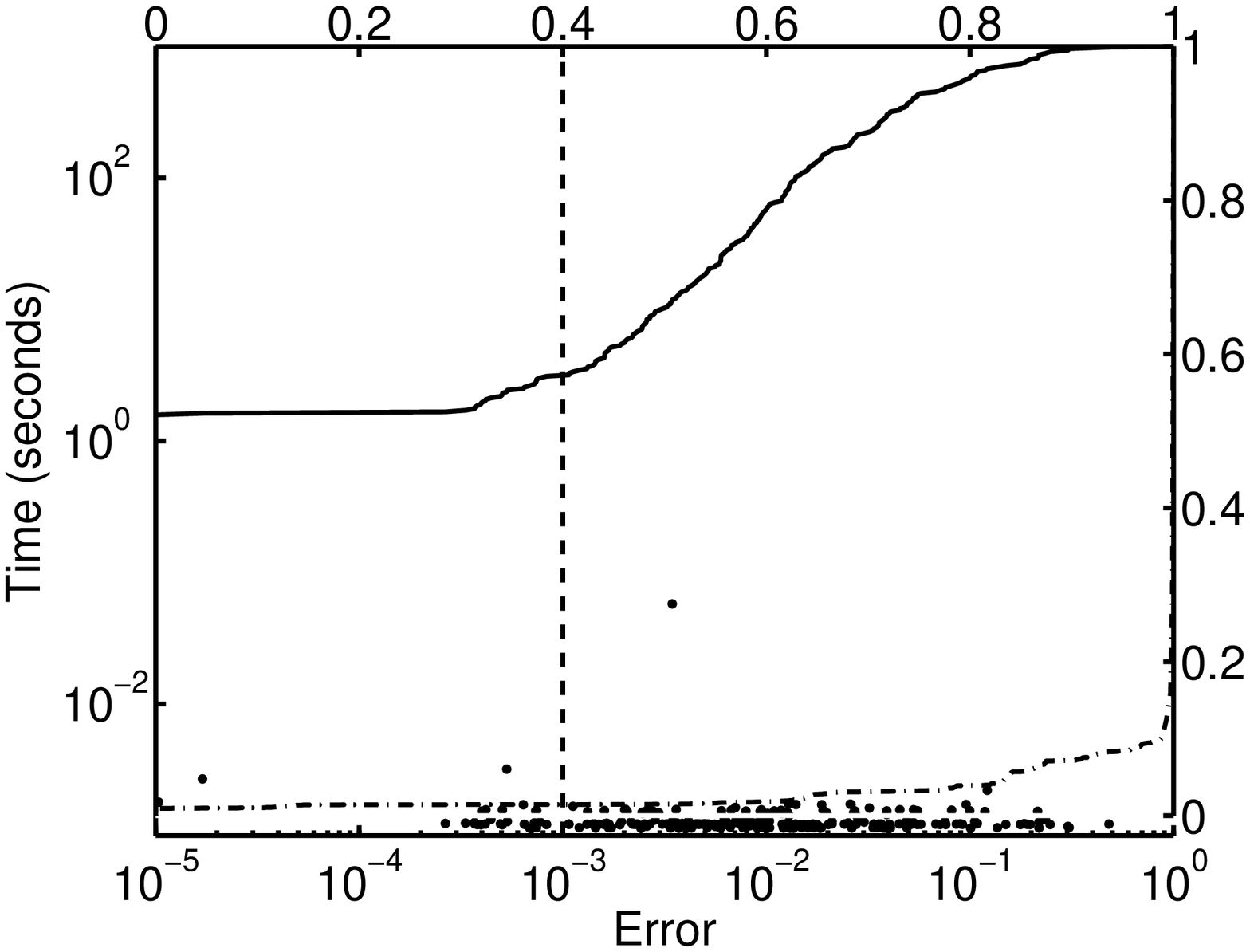} \\ {\tt quadgk} \end{minipage}
\begin{minipage}{5.7cm} \centering \includegraphics[width=5.7cm]{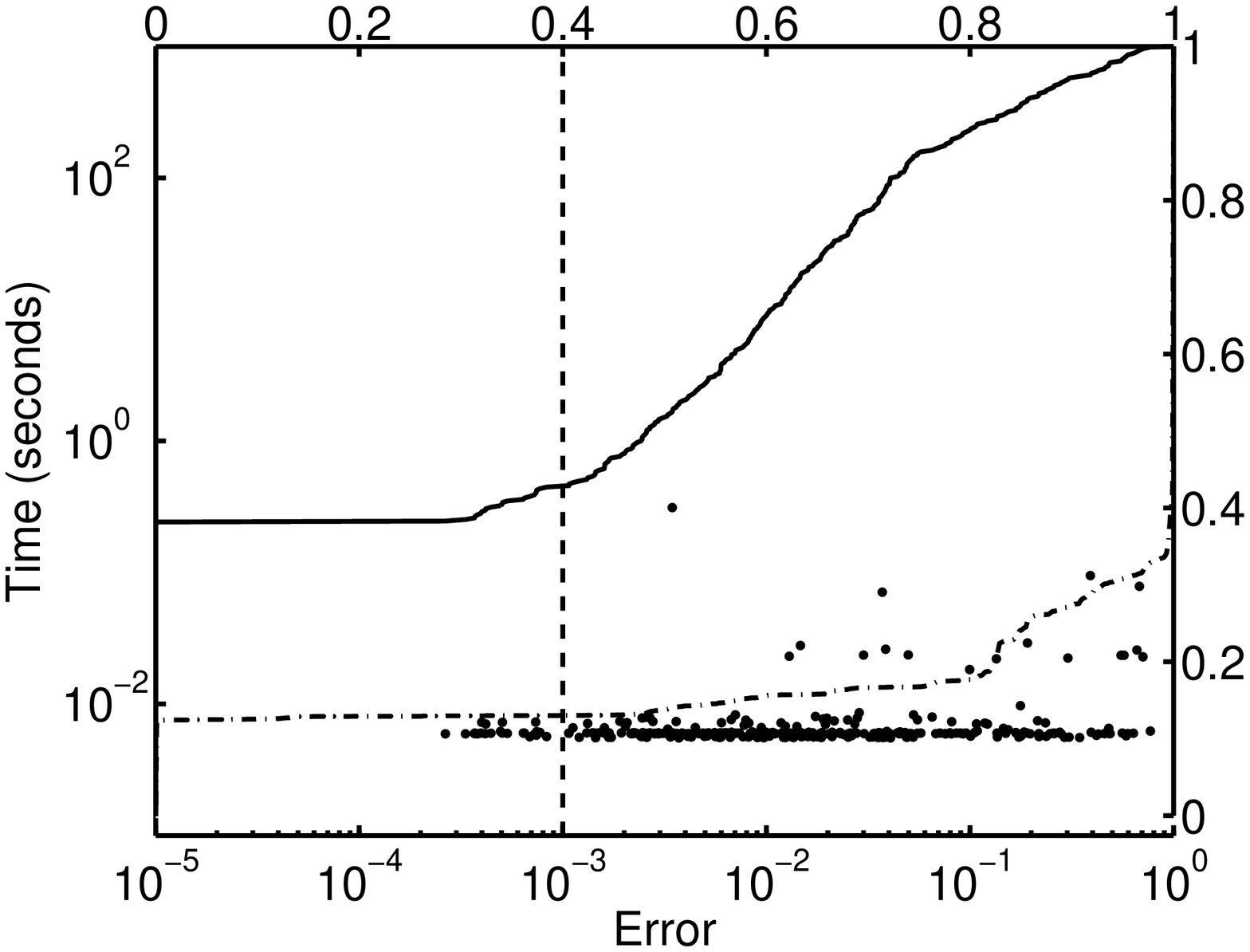} \\ {\tt chebfun} \end{minipage}
\begin{minipage}{5.7cm} \centering \includegraphics[width=5.7cm]{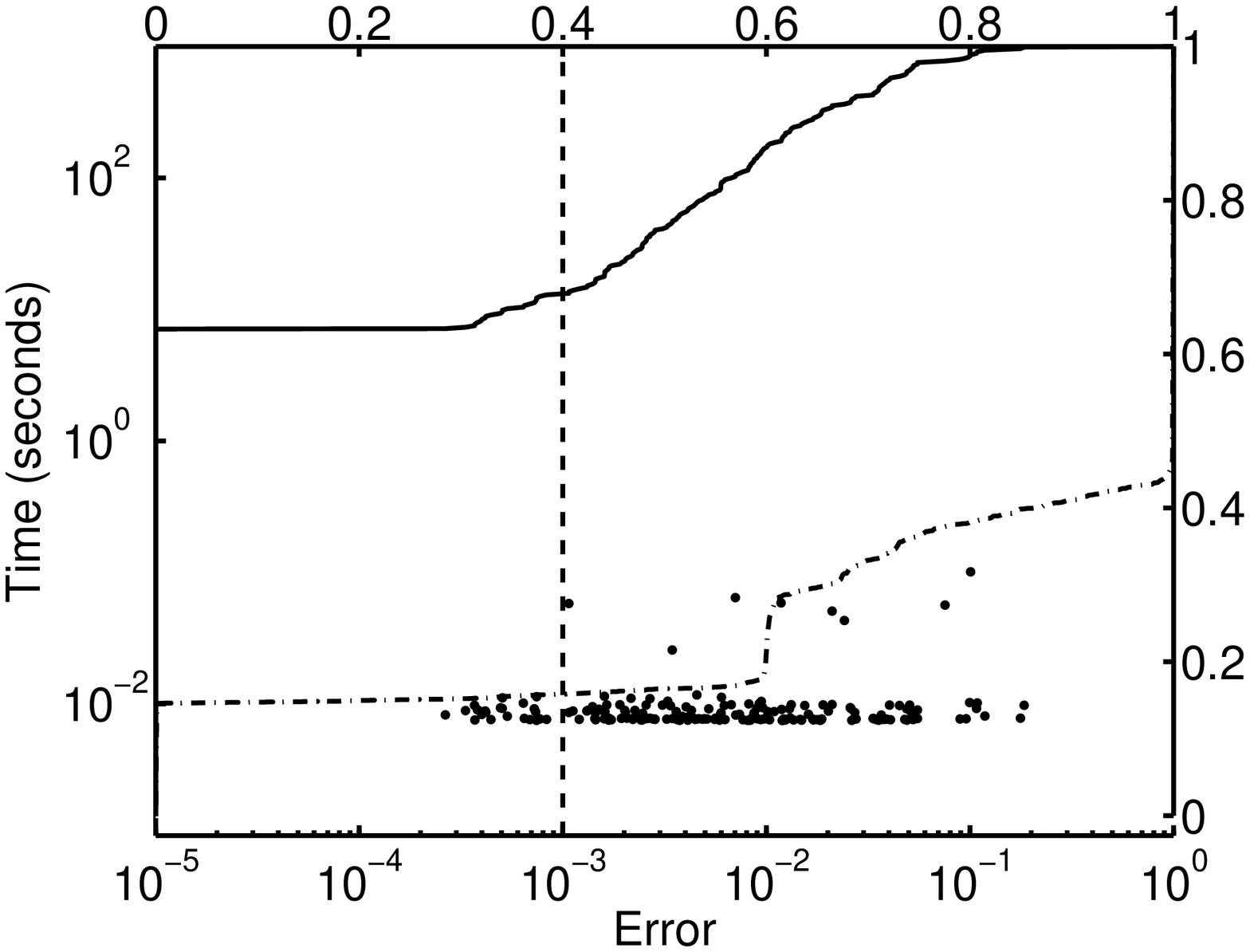} \\ {\tt chebfun}  (heavy duty) \end{minipage}
\begin{minipage}{5.7cm} \centering \includegraphics[width=5.7cm]{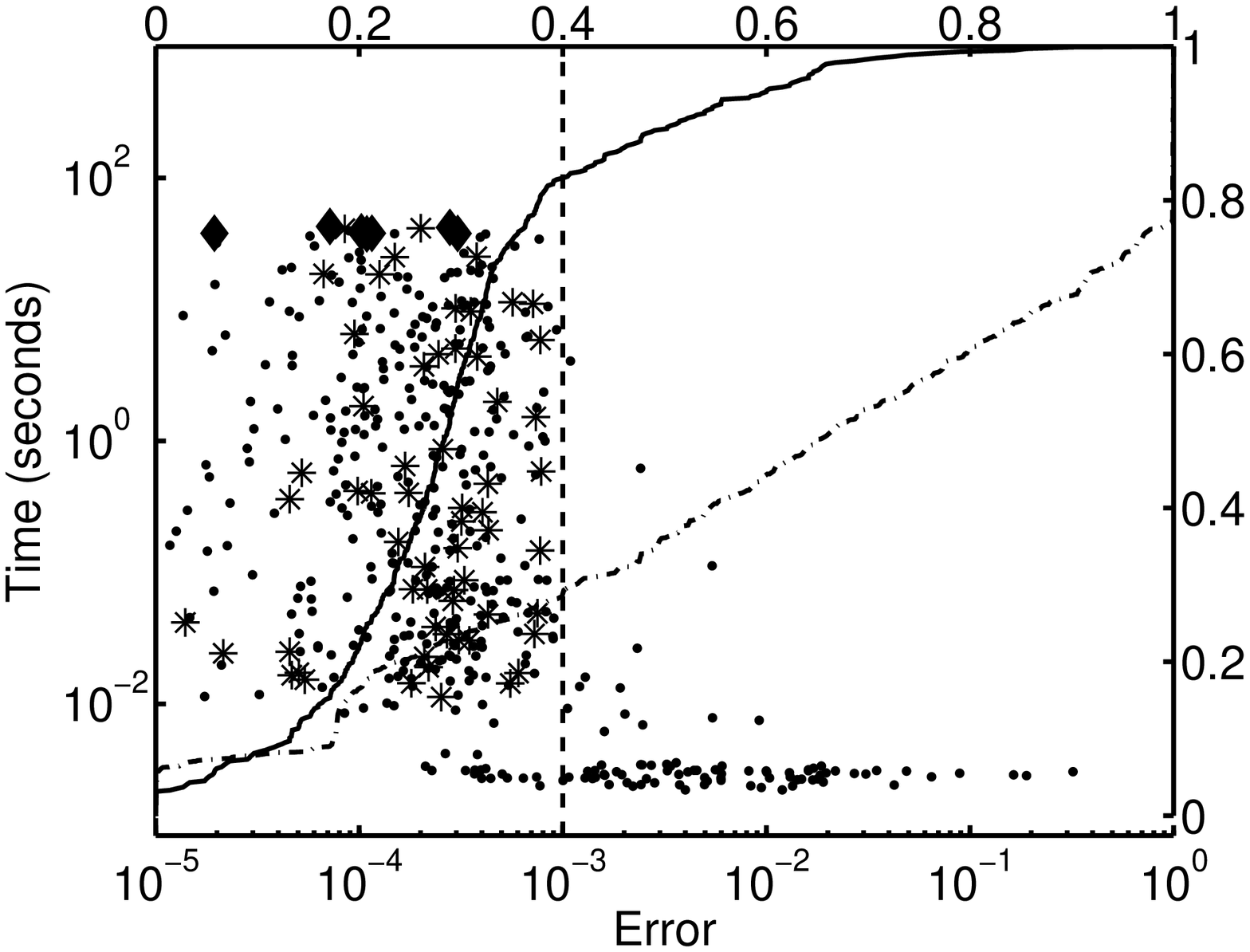} \\ Algorithm \ref{twostagealgo}  \end{minipage}
\begin{minipage}{5.7cm} \centering \includegraphics[width=5.7cm]{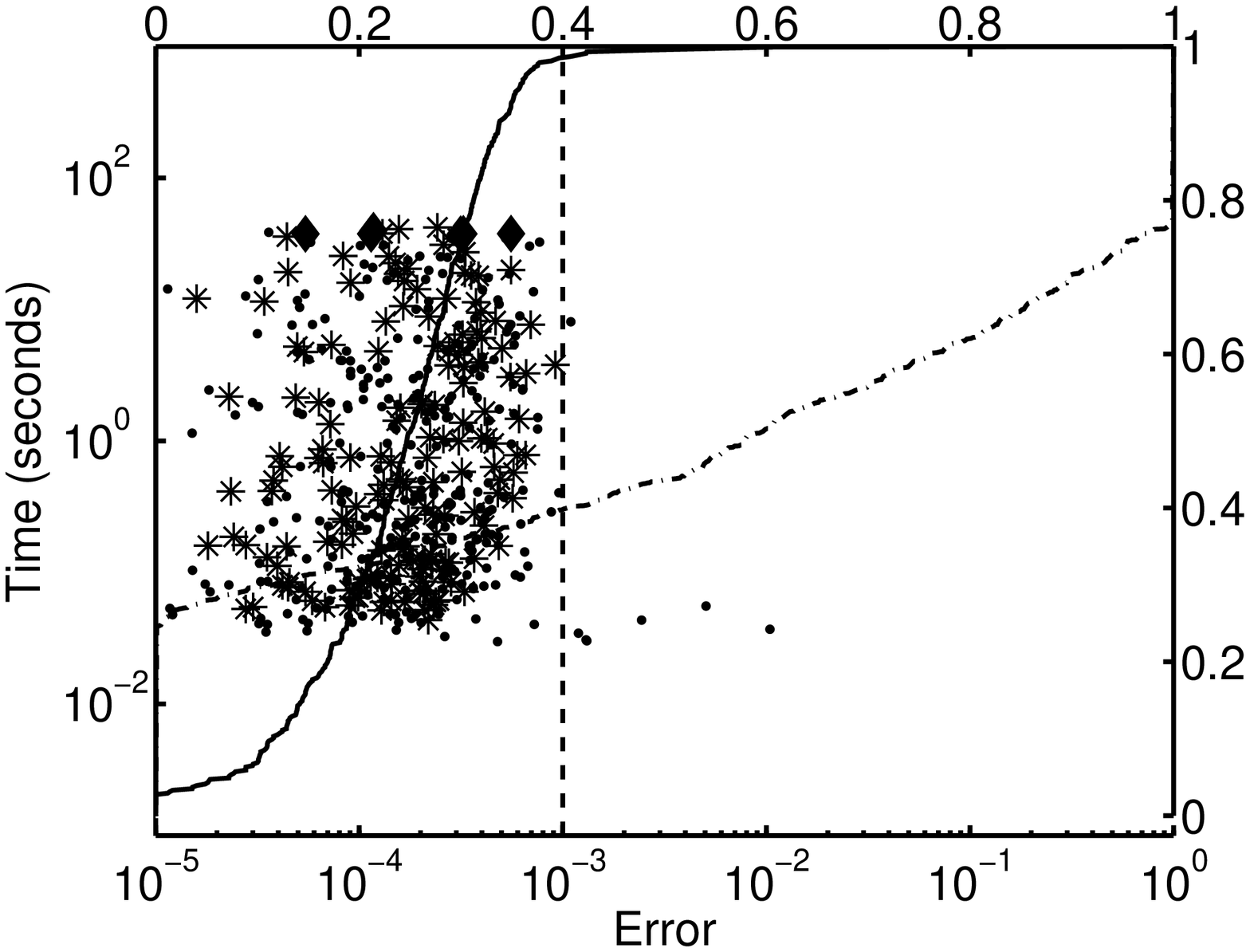} \\ Algorithm \ref{twostagealgo} (heavy duty)\end{minipage}
\begin{minipage}{5.7cm} \centering \includegraphics[width=5.7cm]{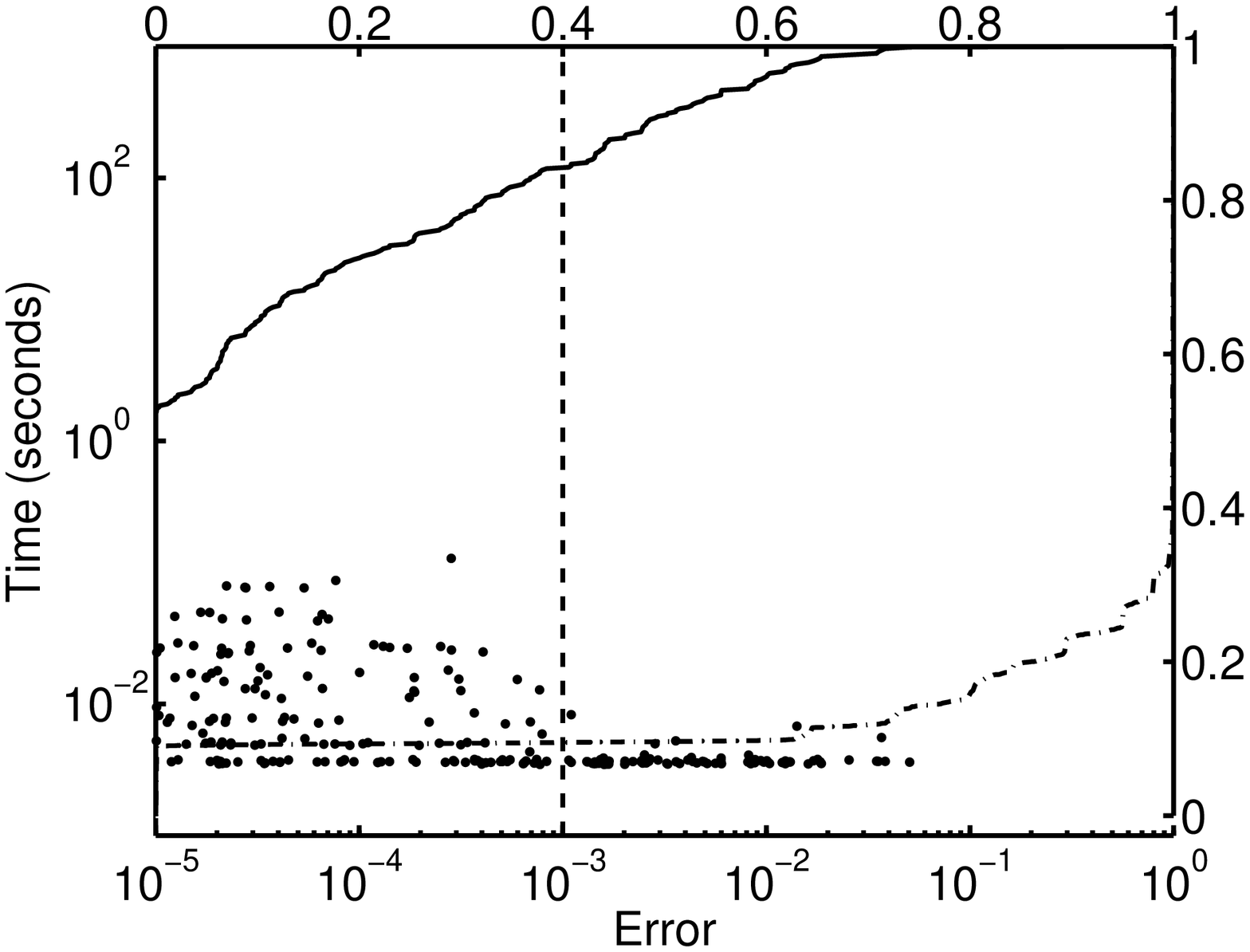} \\  Sobol'\end{minipage}
\begin{minipage}{5.7cm} \centering \includegraphics[width=5.7cm]{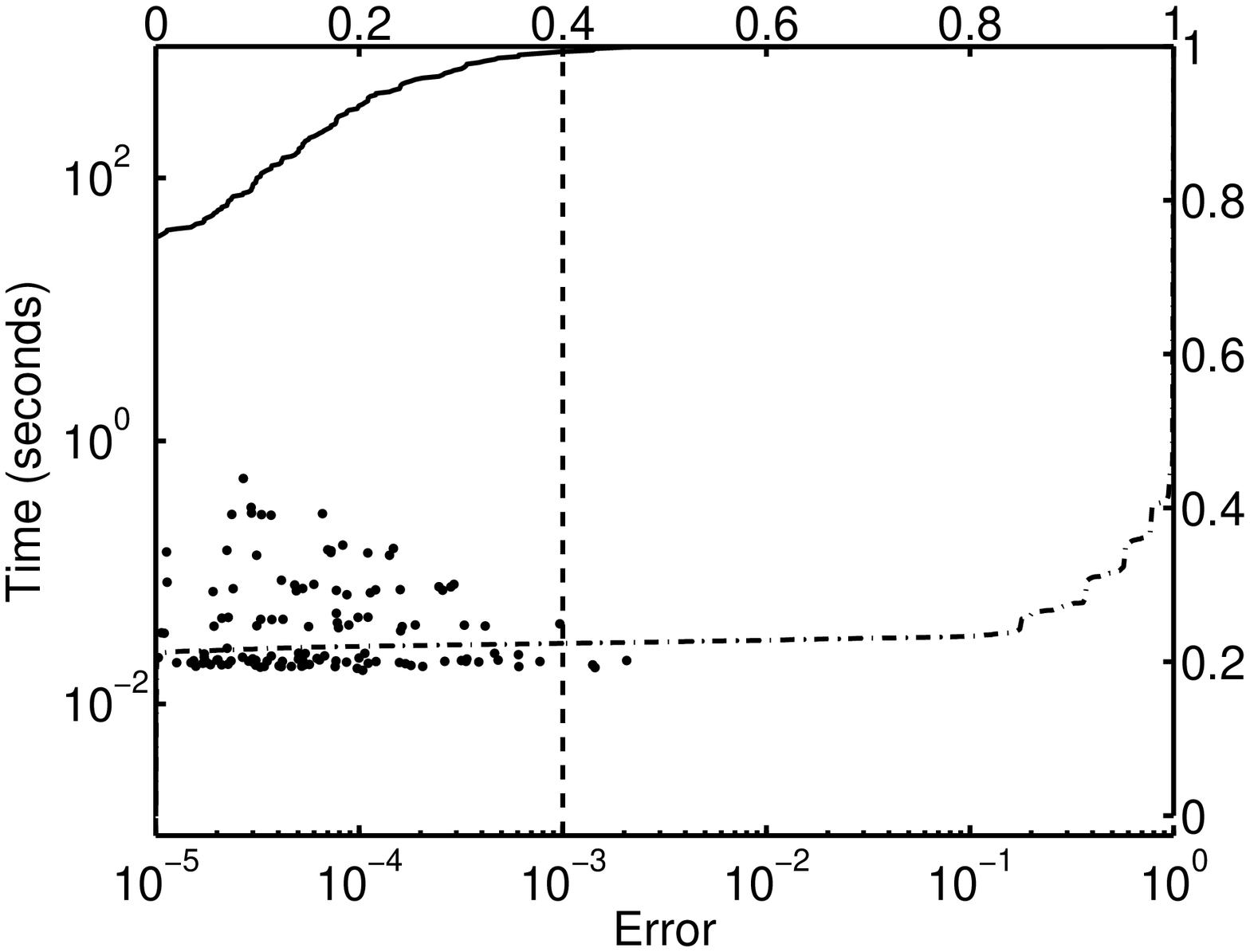} \\ Sobol' (heavy duty) \end{minipage}
\caption{Execution times and errors for test function \eqref{GaussianTestFun} for $d=1$ and error tolerance $\varepsilon=10^{-3}$, and a variety of parameters giving a range of $\sigma$ and $\tkappa$. Those points to the left/right of the dashed 
vertical line 
represent successes/failures of the automatic algorithms.  The solid line shows that cumulative distribution of actual errors, and the dot-dashed line shows the cumulative distribution of execution times.  For the Algorithm \ref{twostagealgo} the points labeled * are those for which the Corollary \ref{integcor} guarantees the error tolerance.\label{GaussianTestFunFig} }
\end{figure}

These $500$ random constructions of $f$ with $d=1$ are integrated using {\tt quad},  {\tt quadgk}, {\tt chebfun}, Algorithm \ref{twostagealgo}, and an automatic quasi-Monte Carlo algorithm that uses scrambled Sobol' sampling \citep{Owe95,Owe96,Owe97,Mat98,HonHic00a,DicPil10a}.  For the Sobol' sampling algorithm the error is estimated by an inflation factor of $1.1$ times the sample standard deviation of $8$ internal replicates of one scrambled Sobol' sequence \citep{Owe06a}.  The sample size is increased until this error estimate decreases to no more than the tolerance.  We 
have not yet found simple conditions on integrands for which this procedure is guaranteed to produce an estimate satisfying the error tolerance, 
and so we do not discuss it in detail.  We are however, intrigued by the fact that it does seem to perform rather well in practice.

For all but {\tt chebfun}, the specified absolute error tolerance is $\epsilon=0.001$.  The algorithm {\tt chebfun} attempts to do all calculations to near machine precision.  The observed error and execution times are plotted in Figure \ref{GaussianTestFunFig}.  Whereas {\tt chebfun} uses a minimum of $2^3+1=9$ function values, the figure labeled ``{\tt chebfun} (heavy duty)'' displays the results of requiring {\tt chebfun} to use at least $2^{8}+1=257$ function values.  Algorithm \ref{twostagealgo} takes $\alpha=0.01$, and $\fudge=1.1$.  For the plot on the left, $n_\sigma=2^{13}=8192$, which corresponds to  $\tkappa_{\max}=2.24$.  For the heavy duty plot on the right, $n_\sigma=2^{18}=262144$, which corresponds to  $\tkappa_{\max}=40.1$.  The same initial sample sizes are used for the Sobol' sampling algorithm.

Figure \ref{GaussianTestFunFig} shows that {\tt quad} and {\tt quadgk} are quite fast, nearly always providing an answer in less than $0.01$ seconds.  Unfortunately, they 
successfully meet the error tolerance only about $30\%$ of the time for {\tt quad} and $50$--$60\%$ of the time for {\tt quadgk}.  The difficult cases are those where $c_1$ is quite small, and these algorithms miss the sharp peak.  The performance of {\tt chebfun} is similar to that of {\tt quad} and {\tt quadgk}.  The heavy duty version of  {\tt chebfun} fares somewhat better.  For both of the {\tt chebfun} plots there are a significant proportion of the data that do not appear because their 
errors are smaller than $10^{-5}$.

In the plots for Algorithm \ref{twostagealgo} the alternative and somewhat costlier formula for $N_{\mu}$ in \eqref{NCBpracticaldef} is employed. An asterisk is used to label those points satisfying $\tkappa \le \tkappa_{\max}$, where $\tkappa$ is defined in \eqref{Lpnormdef}. All such points fall within the prescribed error tolerance,
which is even better than the guaranteed confidence of $99\%$.  
For Algorithm \ref{twostagealgo} (heavy duty) $\tkappa_{\max}$ is larger, so there are more points for which the guarantee holds.  Those points labeled with a dot, are those for which $\tkappa > \tkappa_{\max}$, and so no guarantee holds. The points labeled with a diamond are those for which Algorithm \ref{twostagealgo} attempts to exceed the cost budget that we set, i.e., it wants to choose $n_\mu$ such that $n_{\sigma}+n_\mu > N_{\max}:=10^9$. In these cases $n_\mu$ is chosen as $\lfloor 10^9 - n_\sigma \rfloor$, which often is still large enough to get an answer that satisfies the error tolerance. 
Algorithm \ref{twostagealgo} performs somewhat more robustly than {\tt quad}, {\tt quadgk}, and {\tt chebfun}, because it requires only a low degree of smoothness and takes a fairly large minimum sample. Algorithm \ref{twostagealgo} is generally much slower than the other algorithms because it does not assume any smoothness of the integrand. The more important point is that Algorithm \ref{twostagealgo} has a guarantee, whereas to our knowledge, the other routines do not.

From Figure \ref{GaussianTestFunFig}, the Sobol' sampling algorithm is more reliable and takes less time than Algorithm \ref{twostagealgo}.  This is due primarily to the fact that in dimension one, Sobol' sampling is equivalent to stratified sampling, where the points are more evenly spread than IID sampling.

Figure \ref{GaussianTestFunHDFig} repeats the simulation shown in Figure \ref{GaussianTestFunFig} for the same test function \eqref{GaussianTestFun}, but now with $d=2, \ldots, 8$ chosen randomly and uniformly.  For this case the univariate integration algorithms are inapplicable, but the multidimensional routines can be used.  There are more cases where the Algorithm \ref{twostagealgo} tries to exceed the maximum sample size allowed, i.e., $(n_{\sigma}+n_\mu)d > N_{\max}:=10^9$, but the behavior seen for $d=1$ still generally applies.  

\begin{figure}
\centering
\begin{minipage}{5.7cm} \centering \includegraphics[width=5.7cm]{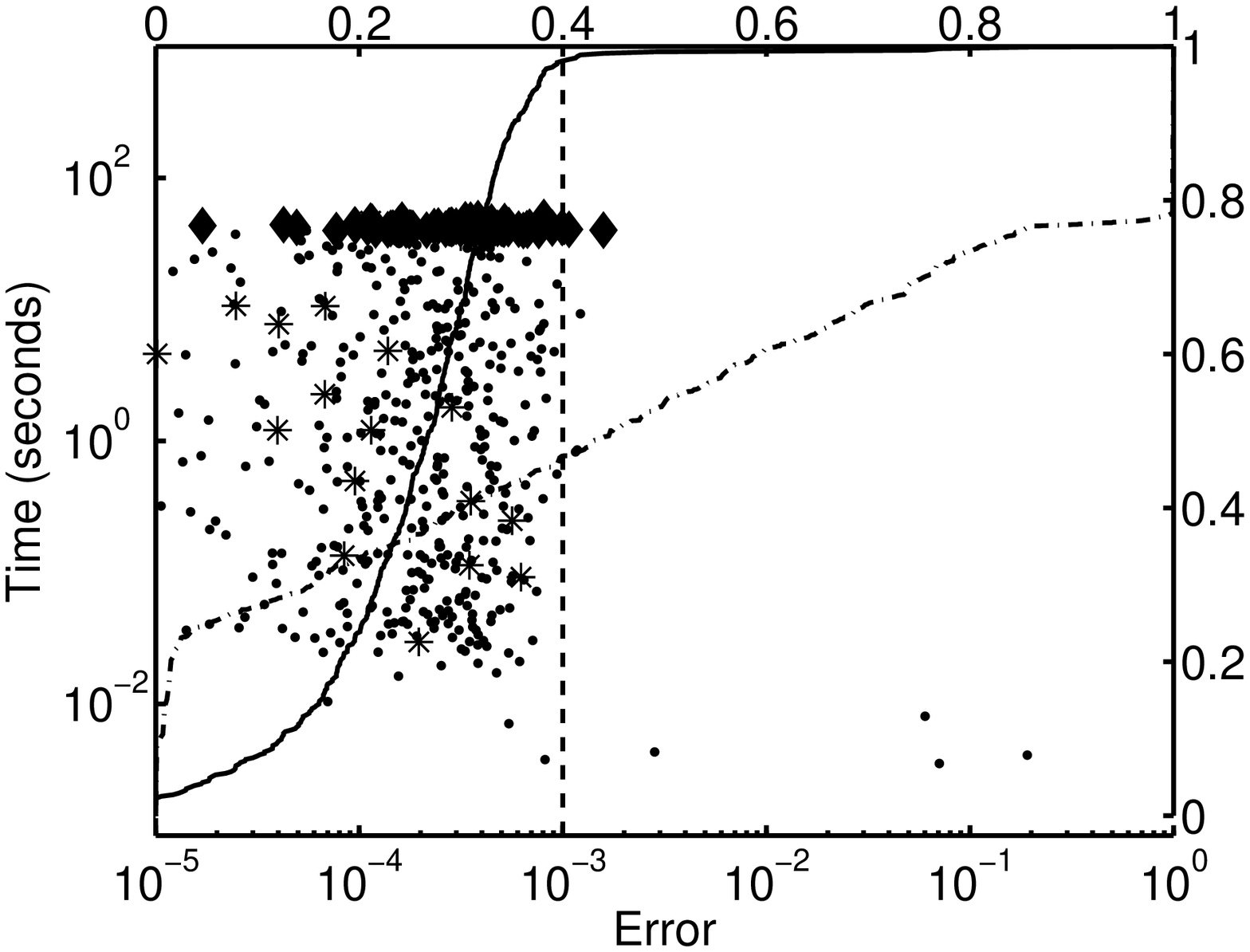} \\ Algorithm \ref{twostagealgo} \end{minipage}
\begin{minipage}{5.7cm} \centering \includegraphics[width=5.7cm]{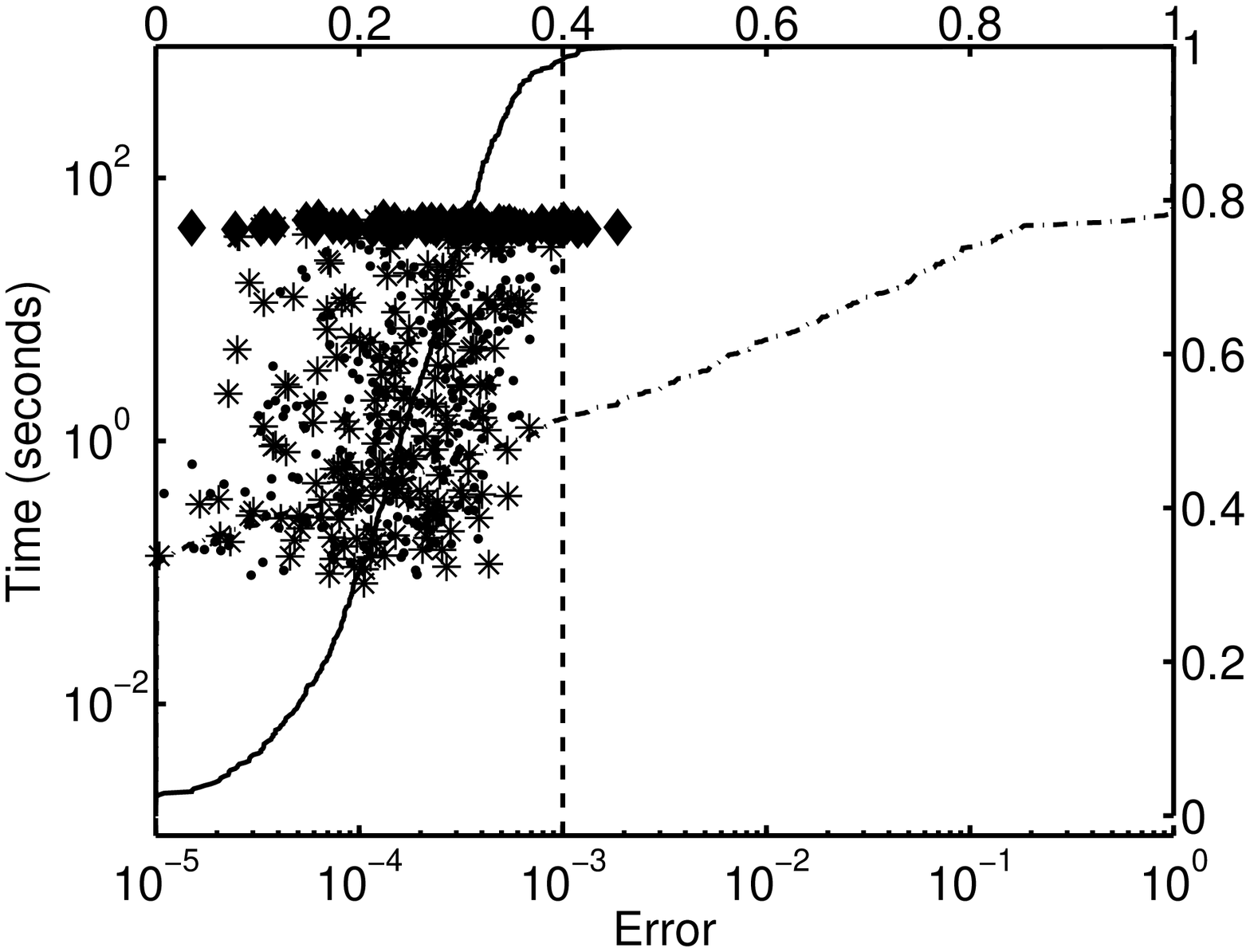} \\ Algorithm \ref{twostagealgo} (heavy duty) \end{minipage}
\begin{minipage}{5.7cm} \centering \includegraphics[width=5.7cm]{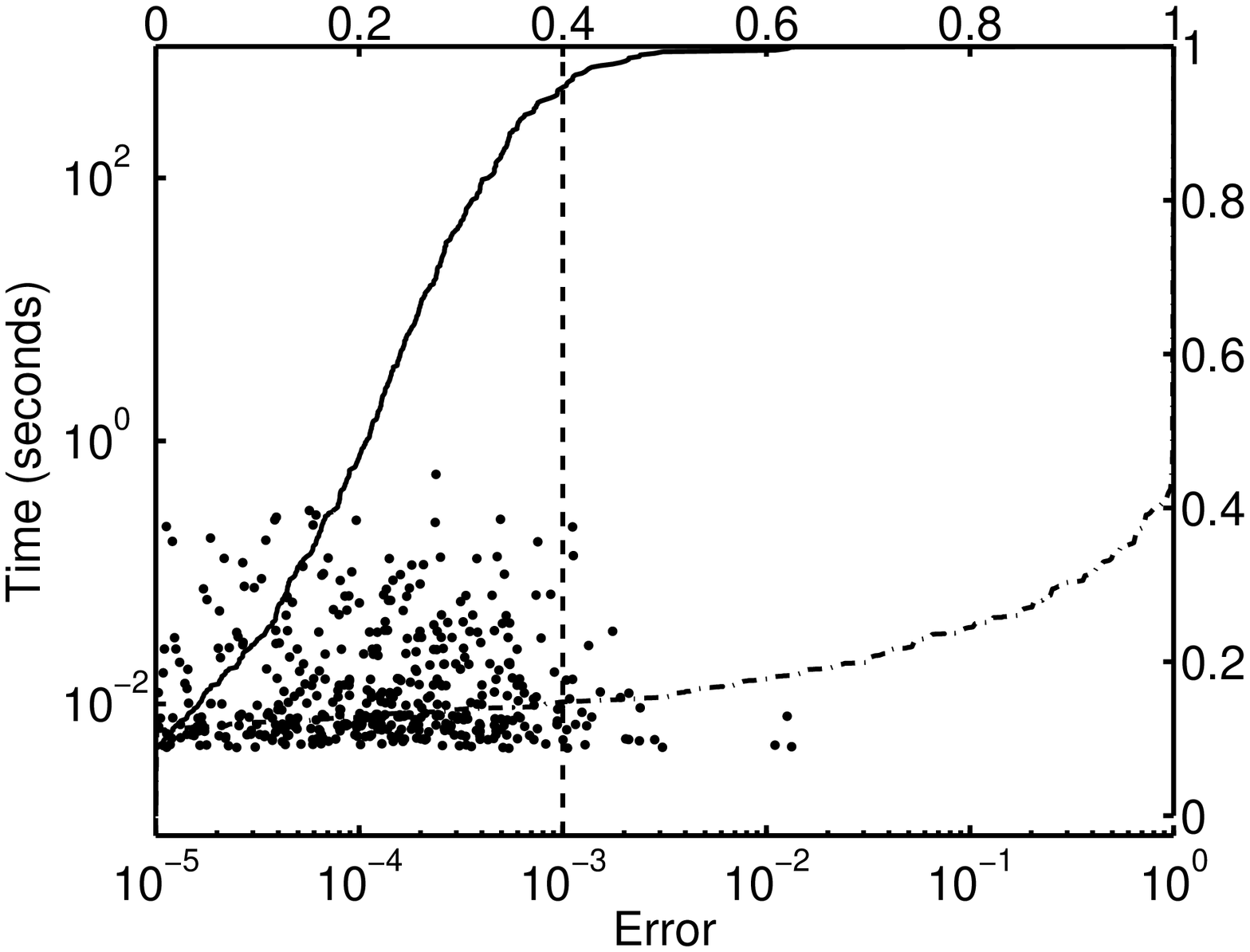} \\ Sobol' \end{minipage}
\begin{minipage}{5.7cm} \centering \includegraphics[width=5.7cm]{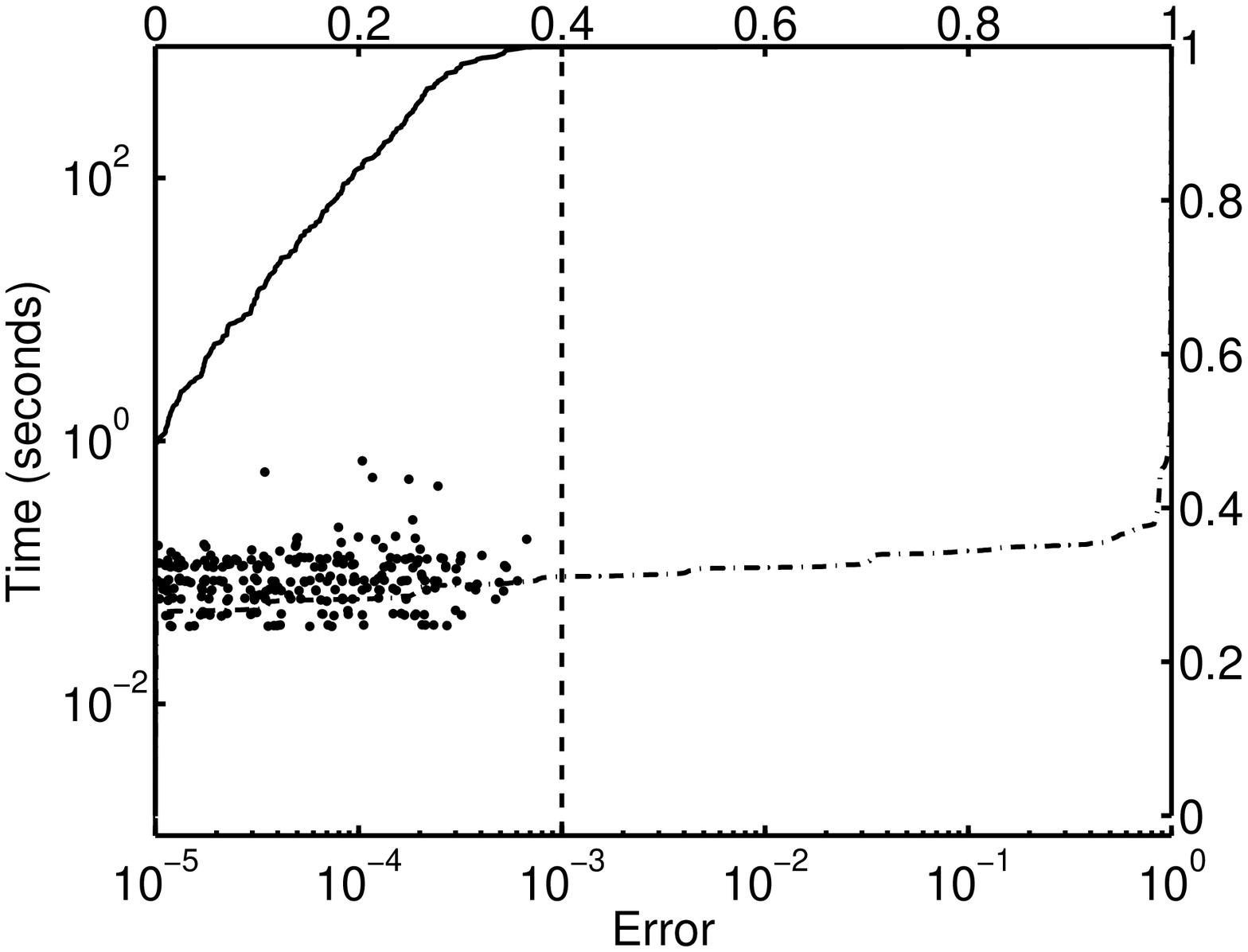} \\ Sobol' (heavy duty) \end{minipage}
\caption{Execution times and errors for test function \eqref{GaussianTestFun} for $d=2, \ldots, 8$ and $\varepsilon=10^{-3}$, with the rest of the parameters as in Figure \ref{GaussianTestFunFig}.\label{GaussianTestFunHDFig}}
\end{figure}

\subsection{Asian Geometric Mean Call Option Pricing}

The next example involves pricing an Asian geometric mean call option.  Suppose that the price of a stock $S$ at time $t$ follows a geometric Brownian motion with constant interest rate, $r$, and constant volatility, $v$.  
One may express the stock price in terms of the initial condition, $S(0)$, as 
\[
S(t)=S(0) \exp[(r-v^2/2)t + v B(t)], \qquad t \ge 0,
\]
where $B$ is a standard Brownian motion.  
The discounted payoff of the Asian geometric mean call option with an expiry of $T$ years, a strike price of $K$, and assuming a discretization at $d$ times is 
\begin{equation} \label{payoff}
Y=\max\biggl([\sqrt{S(0)}S(T/d) S(2T/d)\cdots S(T(d-1)/d) \sqrt{S(T)}]^{1/d} - K,0 \biggr)\me^{-rT}.
\end{equation}
The fair price of this option is $\mu=\e(Y)$. One of our chief reasons for choosing this option for numerical experiments is that its price can be computed analytically, while the numerical computation is non-trivial.

In our numerical experiments, the values of the Brownian motion at different times required for evaluating the stock price, $B(T/d), B(2T/d), \ldots,  B(T)$, are computed via a  Brownian bridge construction.  This means that for one instance of the Brownian motion we first compute $B(T)$, then $B(T/2)$, etc., using independent Gaussian random variables $X_1, \ldots, X_d$, suitably scaled. The Brownian bridge accounts for more of the low frequency motion of the stock price by the $X_j$ with smaller $j$, which allows the Sobol' sampling algorithm to do a better job.  

The option price, $\mu=\e(Y)$, is approximated by Algorithm \ref{twostagealgo} and the Sobol' sampling algorithm using an error tolerance of $\varepsilon=0.05$, and compared to the analytic value of $\mu$.  The result of $500$ replications is given in Figure \ref{AsianGeoMeanCallFig}.  Some of the parameters are set to be fixed values, namely,
\[
S(0)=K=100, \qquad T=1, \qquad r=0.03.
\]
The volatility, $v$, is drawn uniformly between $0.1$ and $0.7$.  The number of time steps, $d$, is chosen to be uniform over $\{1, 2, 4, 8, 16, 32\}$.  The true value of $\mu$ for these parameters is between about $2.8$ and $14$.

\begin{figure}
\centering
\begin{minipage}{5.7cm} \centering \includegraphics[width=5.7cm]{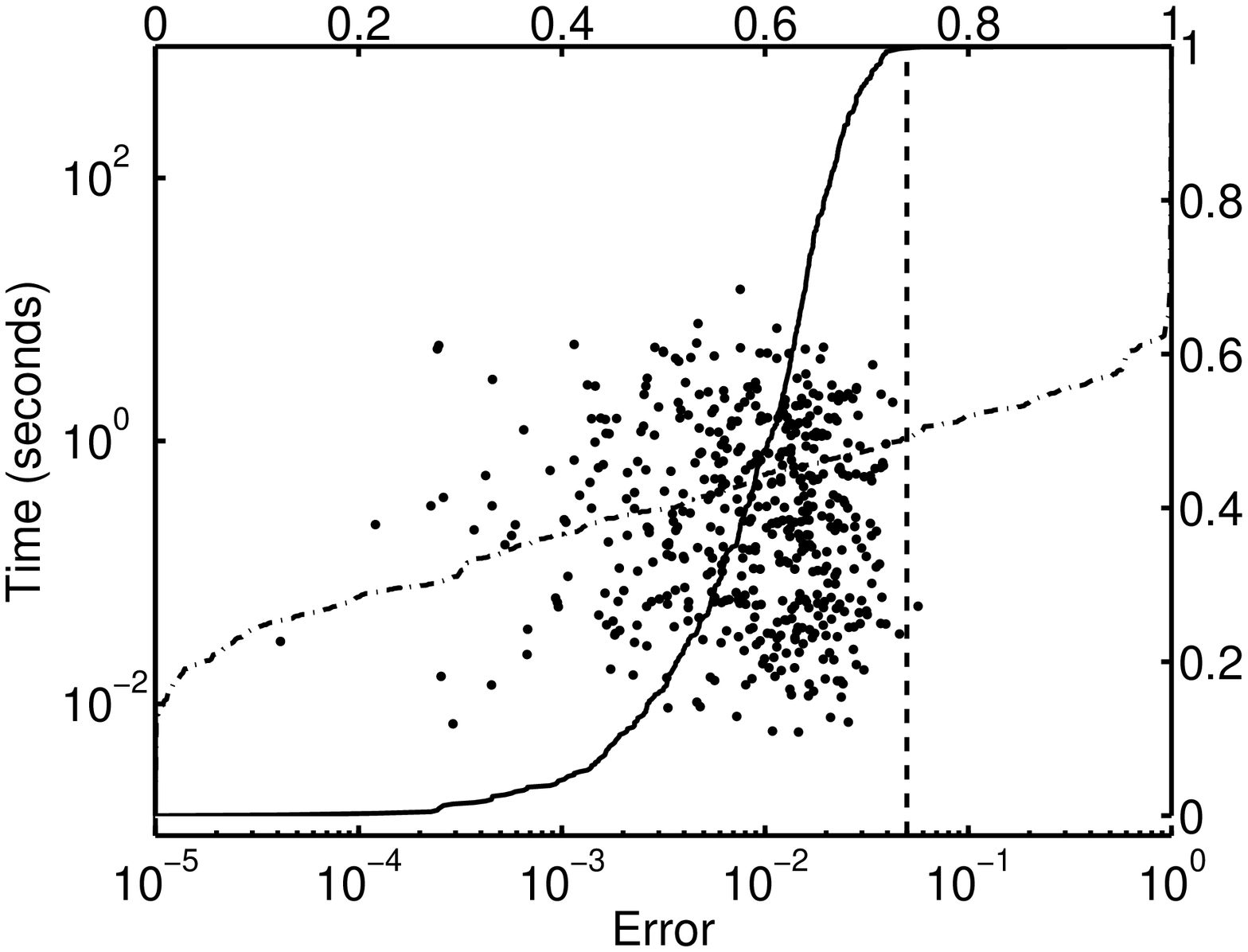} \\ Algorithm \ref{twostagealgo} \end{minipage}
\begin{minipage}{5.7cm} \centering \includegraphics[width=5.7cm]{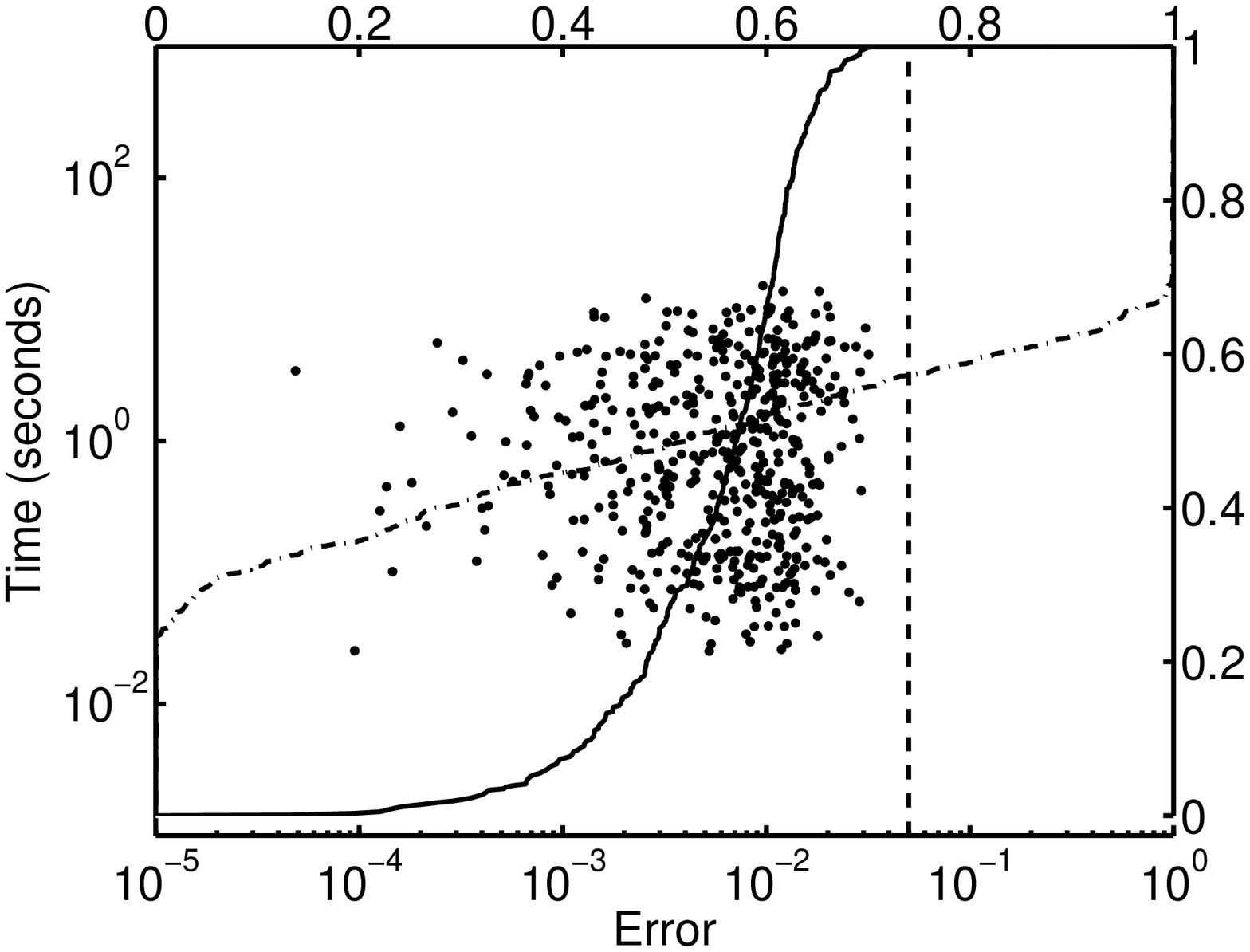} \\ Algorithm \ref{twostagealgo} (heavy duty)\end{minipage}
\begin{minipage}{5.7cm} \centering \includegraphics[width=5.7cm]{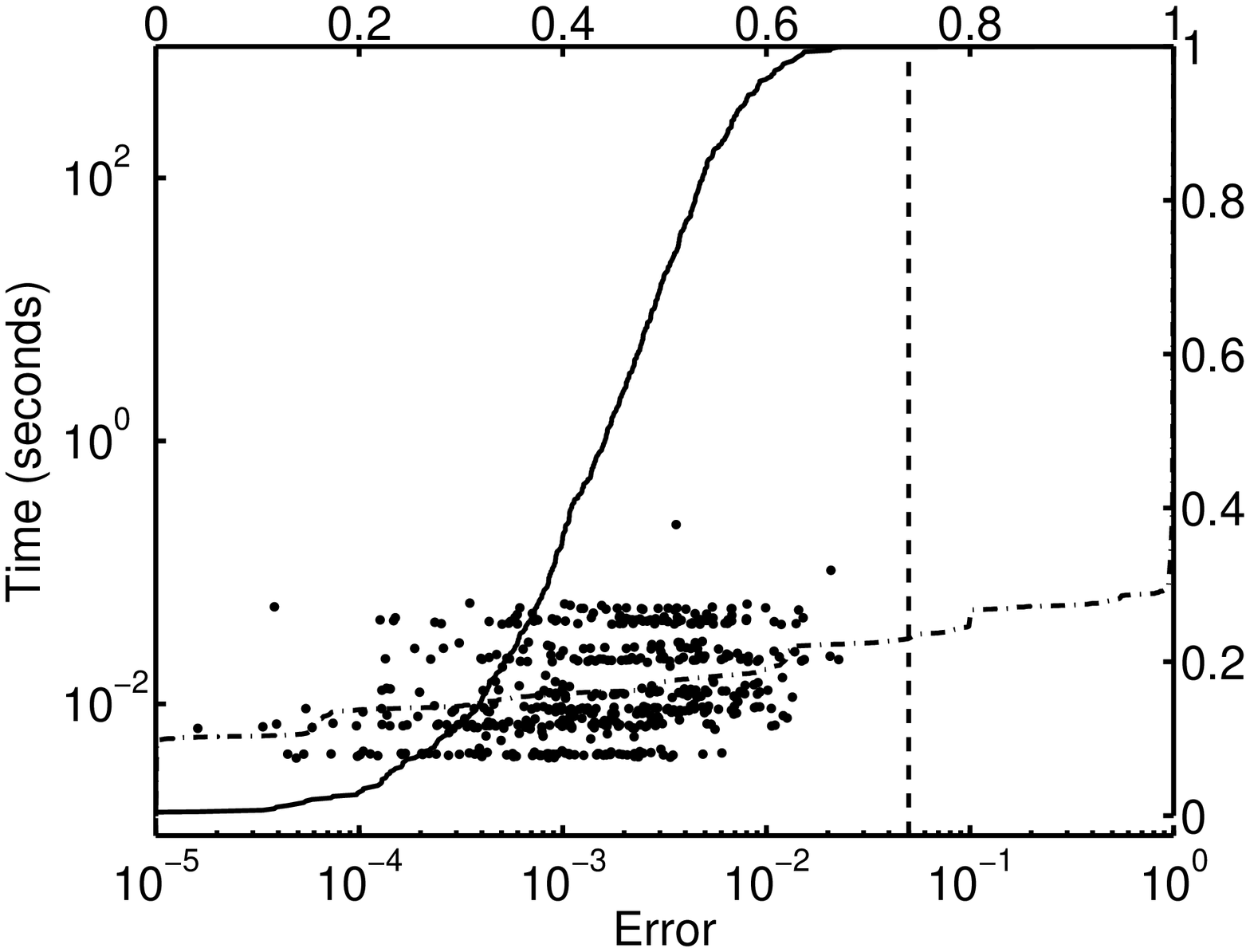} \\ Sobol' \end{minipage}
\begin{minipage}{5.7cm} \centering \includegraphics[width=5.7cm]{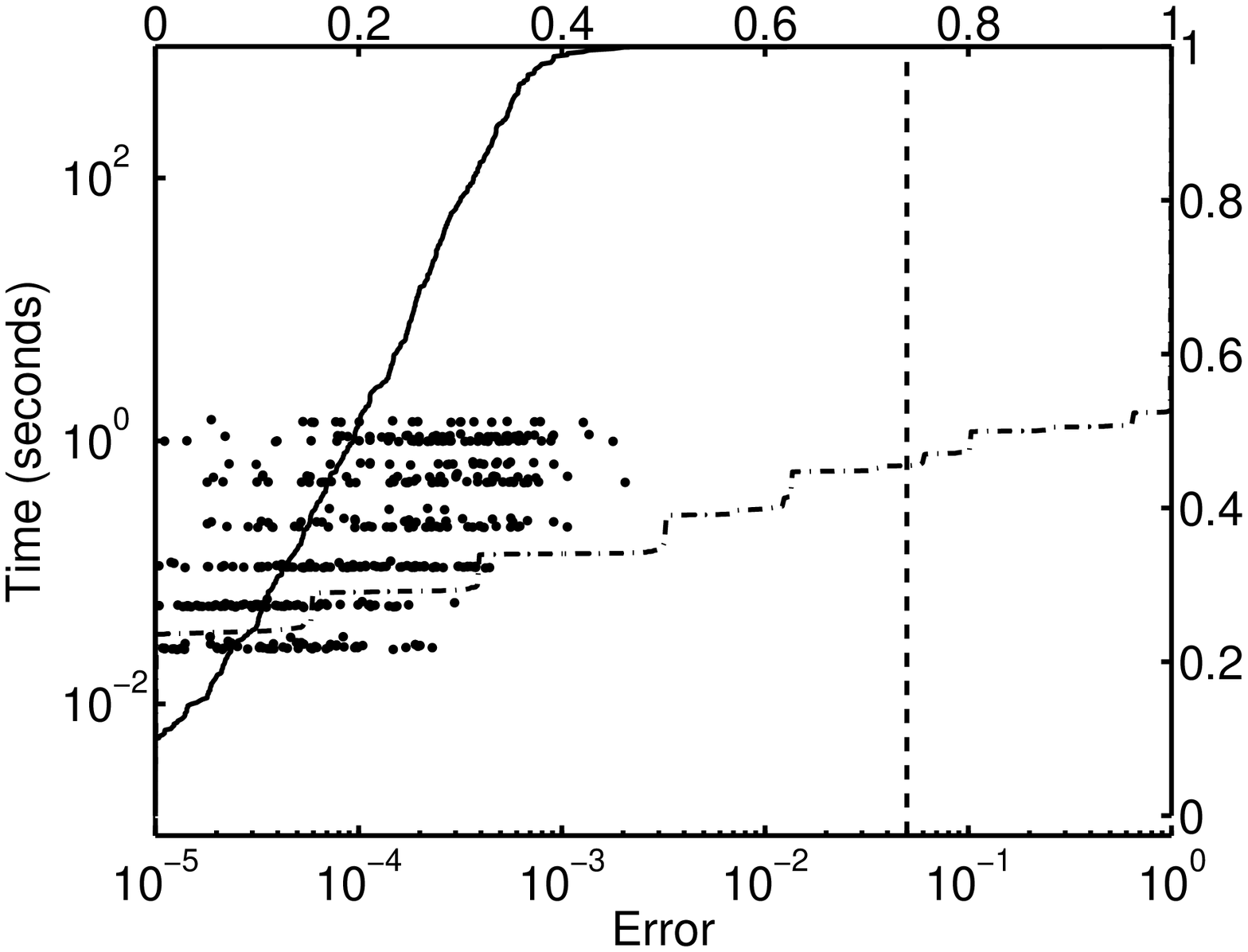} \\ Sobol' (heavy duty) \end{minipage}
\caption{Execution times and errors for the Asian geometric mean call option for $d=1, 2, 4, 8, 16, 32$ and $\varepsilon=0.05$.\label{AsianGeoMeanCallFig}}
\end{figure}

For this example the true kurtosis of $Y$ is unknown.  
Both  Algorithm \ref{twostagealgo} and the  Sobol' sampling algorithm compute the option price to the desired error tolerance with high reliability.  For the IID sampling Algorithm \ref{twostagealgo} and the ordinary Sobol' sampling algorithm it can be seen that some of the errors are barely under the error tolerance, meaning that the sample size is not 
chosen too conservatively.  For the heavy duty Sobol' algorithm, the high initial sample size seems to lead to smaller than expected errors and larger than necessary 
computation times.

\section{Discussion} \label{discusssec}

Practitioners often construct CLT-based confidence intervals with the true variance estimated by the sample variance, perhaps multiplied by some inflation factor.  Often, this approach works, but it has no guarantee of success.  The two-stage algorithm presented here is similar to the approach just described, but it carries guarantees. These are derived by employing Cantelli's inequality to ensure a reliable variance upper bound, and by employing a Berry-Esseen inequality to ensure a large enough sample for the sample mean.  

In certain cases our procedure multiplies the computational cost by a large factor such as $2$ or $10$ or even $100$ compared to what one might spend based on the CLT with a known value of $\sigma$ (see Figure \ref{Costfig}).  While this seems inefficient, one should remember that the total elapsed time may still be well below several seconds.  Furthermore, one typically does not know $\sigma$ in advance, and our adaptive algorithm estimates $\sigma$ and then an appropriate sample size $n_\mu$ from the data.  Our algorithmic cost will be low when the unknown $\sigma$ is small and large when $\sigma$ is large.

Like any algorithm with guarantees, our algorithm does need to make assumptions about the random variable $Y$.  We assume a known bound on the kurtosis of $Y$, either specified directly or implied by the user's choice of the sample size for estimating the variance, $n_\sigma$, and the variance inflation factor, $\fudge^2$.  This is a philosophical choice.  We prefer not to construct an algorithm that assumes a bound on the variance of 
$Y$, because such an algorithm would not be guaranteed for $cY$ with $\abs{c}$ large enough.  If our algorithm works for $Y$, it will also work for $cY$, no matter how large $\abs{c}$ is. 

In practice the user may not know a priori if $\tkappa \le \tkappa_{\max}$ since it is even more difficult to estimate $\tkappa$ from a sample than it is to estimate $\sigma^2$.  Thus, the choice of $\tkappa_{\max}$ relies on the user's best judgement.  Here are a few thoughts that might help.  One might try a sample of typical problems for which one knows the answers and use these problems to suggest an appropriate $\tkappa_{\max}$.  Alternatively, one may think of $\tkappa_{\max}$ not as a parameter to be prescribed, but as a reflection of the robustness of one's Monte Carlo algorithm having chosen $\alpha$, $n_\sigma$ and $\fudge$.  The discussion at the end of Section \ref{algcostsubsec} provides guidance on how to choose $n_\sigma$ and $\fudge$ to achieve a given $\tkappa_{\max}$ in a manner that minimizes total computational cost.  Briefly, one should not skimp on $n_\sigma$, but choose $n_\sigma$ to be several thousand times $\tkappa_{\max}$ and employ a $\fudge$ that is relatively close to unity.  Another way to look at the Theorem \ref{mainadaptthm} is that, like a pathologist, it tells you what went wrong if the two-stage adaptive algorithm fails: the kurtosis of the random variable must have been too large.
In any case, as one can see in Figure \ref{Costfig}, in the limit of vanishing $\varepsilon/\sigma$, i.e., $N_{\text{CLT}} \to \infty$, the choice of $\tkappa_{\max}$ makes a negligible contribution to the total cost of the algorithm.  The main determinant of computational cost is $\varepsilon/\sigma$.

\cite{BahSav56} prove in Corollary 2 that it is \emph{impossible} to construct exact confidence intervals for the mean of random variable whose distribution lies in a set satisfying a few assumptions. One of these assumptions is that the set of distributions is convex.  This assumption is violated by our assumption of bounded kurtosis in Theorem \ref{mainadaptthm}. Thus, we are able to construct guaranteed confidence intervals.

Our algorithm is adaptive because $n_\mu$ is determined from the sample variance.  Information-based complexity theory tells us that adaptive information does not help for the integration problem for symmetric, convex sets of integrands, $f$, in the worst case and probabilistic settings \citep[Chapter 4, Theorem 5.2.1; Chapter 8, Corollary 5.3.1]{TraWasWoz88}.  Here, in Corollary \ref{integcor} the cone, $\cc_{\tkappa_{\max}}$, although symmetric, is not a convex set, so it is possible for adaption to help.

There are a couple of areas that suggest themselves for further investigation.  One is relative error, i.e., a fixed width confidence interval of the form 
\[
\Pr[\abs{\mu-\hmu} \le \varepsilon \abs{\mu}] \ge 1-\alpha.
\]
Here the challenge is that the right hand side of the first inequality includes the unknown mean.

Another area for further work is to provide guarantees for automatic quasi-Monte Carlo algorithms. Here the challenge is finding reliable formulas for error estimation.  Typical error bounds involve a semi-norm of the integrand that is harder to compute than the original integral.  For randomized quasi-Monte Carlo an estimate of the variance of the sample mean using $n$ samples does not tell you much about the variance of the sample mean using a different number of samples.

\section*{Acknowledgements} The authors gratefully acknowledge discussions with Erich Novak and Henryk Wo\'zniakowski, and the comments of the referees.  
The plots of the univariate fooling functions were prepared with the help of Nicholas Clancy and Caleb Hamilton.  The first and fourth authors would like to express their thanks to the local organizers of the Tenth International Conference on Monte Carlo and Quasi-Monte Carlo Methods in Scientific Computing for hosting a wonderful conference.

\bibliographystyle{spbasic}
\bibliography{FJH22,FJHown22}
\end{document}